\def\mod {\mathrm{mod}}
\newcommand{\E}{\mathcal{E}}
\newcommand{\C}{\mathcal{C}}
\renewcommand{\L}{\mathcal{L}}
\newcommand{\A}{\mathcal{A}}
\newcommand{\D}{\mathcal{D}}
\numberwithin{equation}{section}
\newtheorem{theorem}{Theorem}[section]
\newtheorem{proposition}[theorem]{Proposition}
\newtheorem{corollary}[theorem]{Corollary}
\theoremstyle{definition}
\newtheorem{example}[theorem]{Example}
\newtheorem{definition}[theorem]{Definition}
\newtheorem{remark}[theorem]{Remark}
 \title{Billiard Ordered Games and Books}
\author[1,3]{Vladimir Dragovi\'c}
\author[2]{Sean Gasiorek}
\author[2,3]{Milena Radnovi\'c}
\affil[1]{\textsc{The University of Texas at Dallas, Department of Mathematical Sciences}}
\affil[2]{\textsc{The University of Sydney, School of Mathematics and Statistics}}
\affil[3]{\textsc{Mathematical Institute SANU, Belgrade}}
\affil[ ]{\texttt{vladimir.dragovic@utdallas.edu, sean.gasiorek@sydney.edu.au, milena.radnovic@sydney.edu.au}}
\date{}
\begin{document}

\maketitle

\centerline{\it In Memory of Alexey V. Borisov (1965--2021).}

\begin{abstract}
The aim of this work is to put together two novel concepts from the theory of integrable billiards: billiard ordered games  and confocal billiard books. 
Billiard books appeared recently in the work of Fomenko's school, in particular of V.~Vedyushkina. 
These more complex billiard domains are obtained by gluing planar sets  bounded by arcs of confocal conics along common edges. 
Such domains are used in this paper to construct the configuration space for billiard ordered games.
We analyse dynamical and topological properties of the systems obtained in that way.

\smallskip

\emph{Keywords:} integrable systems, topological billiards, billiard books, Fomenko graphs

\smallskip

\textbf{MSC2020:} 37C83, 37J35, 37J39
\end{abstract}

\tableofcontents

\section{Introduction}\label{intro}

Mathematical billiards provide fundamental examples of a broad family of dynamical systems.
The billiard dynamics essentially depends on the shape of the billiard table.
The elliptic billiard \cites{Bir,KT1991, Tab} connects this family of dynamical systems with the notion of integrability, specifically complete Liouville integrability. Through the Liouville-Arnold theorem, the dynamics of all integrable systems are similar once the dynamics is restricted to the invariant Liouville tori \cite{Ar}. As such, describing the organisation and configuration of Liouville tori is a valuable undertaking in the study of integrable systems. 

Fomenko and his school developed a suite of tools to describe the topological classification of integrable systems using what are now known as \emph{Fomenko graphs} and \emph{Fomenko-Zieschang invariants}, see \cite{Fomenko1987,FZ1991}.
The book \cite{BF2004} contains a detailed description of this type of topological classification, including a large list of well-known integrable systems, such as the integrable cases of rigid body motion and geodesic flows on surfaces.
The use of topological tools in the study of integrable billiards was initiated in \cites{DR2009}, see also \cites{DR2010, DR2011}. Further details and applications to other integrable systems can be found in the literature related to billiards \cites{Fokicheva2014, R2015, DR2017, VK2018,FV2019, FV2019a, PRK2020, DGR2021}. For the applications in the broader theory of Hamiltonian systems with two degrees of freedom see \cites{BMF1990,RRK2008,BBM2010}.

This work is inspired by the recent results of Fomenko's school, in particular V.~Vedyushkina, who introduced more complex billiard domains, obtained by gluing planar sets bounded by arcs of confocal conics along common edges.
By constructing such domains, one gains the flexibility to model various integrable behaviours, with the ultimate goal to show that each integrable system with two degrees of freedom is equivalent to one such a billiard.
See \cites{FVZ2021,Ved2021, VK2020, Ved2020, Khar2020} and references therein for the most recent progress in that direction of research.

In this paper, we use the idea of billiard books in order to get a deeper insight into the billiard ordered games, which were introduced in \cite{DR2004}, see also \cites{DR2006, DR2010, DR2011}.
This approach enabled us to extend the dynamics of the billiard ordered games to any initial conditions along with the study of their topological properties.

This paper is organised as follows.
In Section \ref{sec:billiards}, we recall the basic definitions of elliptic billiards and billiard ordered games, and introduce the billiard books that will be used further.
Section \ref{sec:Examples} consists of the examples where we constructed the billiard books realising certain billiard ordered games. We also analyse the dynamical behaviour of the billiard particle for various caustics and initial conditions. 
In Section \ref{sec:construction}, we provide a general construction of a billiard book where a given billiard ordered game is realised.
Section \ref{sec:topology} contains the topological analysis of the billiards within books associated with the billiard ordered games.

Let us conclude the introduction by observing that both billiard systems and topological analysis of integrable mechanical systems belonged to a wide spectrum of the scientific interests of Alexey Vladimirovich Borisov, see for example \cite{BBM2010} and \cite{BKM2011}. His premature death has been a big loss for our scientific community. His multiple talents, energy, legacy of numerous results, and strong scientific school will remain as the inspiration for generations to come.

\section{Elliptic billiards and billiard ordered games}\label{sec:billiards}

In Section \ref{sec:elliptic-billiards}, we set the notation for the confocal family and recall the definition of billiard within an ellipse.
In Section \ref{sec:games}, the  definition of billiard ordered games is given, along with basic properties.
In Section \ref{sec:books}, the billiard books that will be used in this paper are introduced.

\subsection{Elliptic billiards}\label{sec:elliptic-billiards}

Consider an ellipse in the Euclidean plane:
$$\E\ :\ \frac{x^2}{a}+\frac{y^2}{b}=1, \quad a>b>0.$$

Billiard within $\E$ is a dynamical system where the particle is moving freely inside $\E$ and obeying the \emph{billiard reflection law} when it hits the boundary: the angles of incidence and reflection are congruent to each other and the speed remains unchanged before and after the reflection.

The confocal family of this ellipse is given by:
\begin{equation}
\C_{\lambda}\ :\ \frac{x^2}{a-\lambda} + \frac{y^2}{b-\lambda}=1.
\label{eq:ConFam}
\end{equation}

The members of the confocal family are ellipses for $\lambda<b$, hyperbolae for $\lambda \in (b,a)$, and degenerate for $\lambda \in \{b,a\}$. 

Each trajectory of the billiard within $\E$ has a \emph{caustic}: a curve touching all lines containing segments of the trajectory.
Moreover, the caustic belongs to the confocal family \eqref{eq:ConFam}.

\subsection{Billiard ordered games}\label{sec:games}

We recall here the definition and main properties of billiard ordered games, which were introduced in \cite{DR2004}.
In that work, a generalisation in an arbitrary-dimensional Euclidean space was considered.
Since in this paper we focus on the planar case, all considerations here will be restricted to that.

\begin{definition}
The \emph{billiard ordered game} joined to the $n$-tuple $(\E_1, \ldots, \E_{n})$ of ellipses from the confocal family \eqref{eq:ConFam}, with \emph{signature} $(i_1, \ldots, i_n) \in \{-1,1\}^n$ is a billiard system such that each trajectory with consecutive vertices $\dots, A_0, A_1, A_2, \dots$ satisfies:
\begin{itemize}
\item $A_{k}\in\E_{s}$ if $s\equiv k\ (\mod\ n)$;
\item at the point $A_{k}$ of the trajectory, the billiard particle is reflected off $\E_{s}$ from inside if $i_s = 1$ and from the outside if $i_s=-1$.
\end{itemize}
We refer to the number $n$ as the \emph{length} of the billiard ordered game.
\end{definition}

\begin{remark}\label{rem:E1}
When the $n$-tuples of ellipses and the signature are simultaneously cyclically permuted, the billiard ordered game remains the same.
Thus, without losing generality, if needed, we can assume that $\E_1$ is the outermost boundary and $\E_n\neq\E_1$.
\end{remark}

\begin{remark}\label{rem:minusone} 
In order for the trajectories of such a game to stay bounded, no two consecutive reflections can be from outside.
Moreover, if the reflection off $\E_{s}$ is from outside, then that ellipse is contained within the previous and the next ellipse in the game. 
\end{remark}

\begin{example}
Figure \ref{fig:OBGame} shows a trajectory corresponding to the billiard ordered game:
$$
(\E_{1}, \E_{3}, \E_{2},\E_{3},\E_{1},\E_{2},\E_{1})
$$
with signature $(1,-1,1,1,1,-1,1)$.
\end{example}

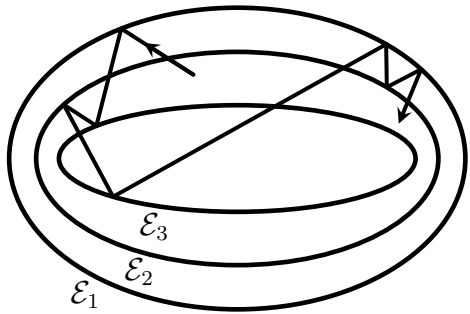
\begin{figure}[htp]
\centering
\begin{tikzpicture}[line cap=round,line join=round,>=stealth,x=1cm,y=1.cm]
\clip(-3.05,-2.05) rectangle (3.05,2.05);
\draw [line width=1.75pt] (0.,0.) ellipse (2.345cm and 0.707cm);
\draw [line width=1.75pt] (0.,0.) ellipse (3.cm and 2.cm);
\draw [line width=1.75pt] (0.,0.) ellipse (2.645cm and 1.414cm);
\draw [line width=1.5pt] (-0.577,1.112)-- (-1.526,1.722);
\draw [line width=1.5pt] (-1.526,1.722)-- (-1.853,0.433);
\draw [line width=1.5pt] (-1.853,0.433)-- (-2.283,0.715);
\draw [line width=1.5pt] (-2.283,0.715)-- (-1.633,-0.507);
\draw [line width=1.5pt] (-1.633,-0.507)-- (1.958,1.515);
\draw [line width=1.5pt] (1.958,1.515)-- (1.967,0.946);
\draw [line width=1.5pt] (1.967,0.946)-- (2.418,1.183);
\draw [->,line width=1.5pt] (-0.577,1.112) -- (-1.230,1.5329);
\draw [->,line width=1.5pt] (2.418,1.183) -- (2.124, 0.462);
\draw[color=black] (-1.09,-0.9) node {$\E_{3}$};
\draw[color=black] (-1.29,-1.5) node {$\E_{2}$};
\draw[color=black] (-2,-1.8) node {$\E_{1}$};
\end{tikzpicture}
\caption{A billiard ordered game within three ellipses.}
\label{fig:OBGame}
\end{figure}

One feature of the billiard ordered game is that each trajectory has a fixed caustic from the confocal family \eqref{eq:ConFam}. 
Moreover, the full Poncelet theorem can be applied to such trajectories.
In \cite{DR2004}, the analytic conditions for closure of the trajectories of billiard ordered game, in the Euclidean space of arbitrary dimension, were derived. 

\begin{remark}
Notice that a billiard ordered game can be realised only if the caustic is either an ellipse contained in all ellipses $\E_1$, \dots, $\E_n$ or a hyperbola.
Moreover, it is not clear how to extend the dynamics to all possible caustics.
By constructing the configuration space using billiard books, such dynamics are naturally determined.
\end{remark}

\subsection{Billiard books}\label{sec:books}

Fomenko and his collaborators have produced a set of conjectures and later proved statements which connect the topological structure of various integrable systems and the elliptic billiards in the Euclidean plane. They successfully used latter to model former. 
By considering domains that are subsets of the ellipse which are bounded by arcs of confocal hyperbolas and ellipses, which we call \emph{leaves}, it is conjectured that any integrable system with two degrees of freedom can be represented by gluing copies of such domains together along boundary curves in a suitable fashion. This process constructs a generalised billiard domain called a \emph{billiard book}.  Such gluings are defined by permutations which determine which leaf the billiard moves to upon reflection with the glued boundary. See \cites{VK2018,FV2019,FKK2020} and references therein for details. 

The leaves in the aforementioned sources are often reduced by symmetries across the coordinate axes. 
In contrast to that, in this work, the leaves we use are either elliptic disks or elliptic annuli, which are better suited to the billiard ordered game.
The configuration space we consider will be the union of finitely many such leaves together with gluing permutations along their boundaries.
The precise definition is as follows.

Let $\E_1$, \dots, $\E_n$ be given confocal ellipses and consider a finite collection of \emph{leaves}, such that each of them is either an annulus between two of the given ellipses or an elliptic disk whose boundary is one of those ellipses.
We note that the collection may contain several copies of the same annulus or the same disk, which will be considered as distinct leaves in the book.

The configuration space we consider will be the union of those leaves together with gluing permutations $\sigma_1$, \dots, $\sigma_n$ along their boundaries $\E_1$, \dots, $\E_n$.
More precisely, each $\sigma_k$ is a permutation of the leaves that contain the ellipse $\E_k$ on its boundary.
The motion of the billiard particle is then determined by the following:
\begin{itemize}
\item 
the billiard particle moves along straight segments within each leaf;
	
\item 
after hitting the boundary of a leaf $\L$ at a point of the ellipse $\E_k$, the particle will continue the motion according to the following rules:

\begin{itemize}
	\item[(R1)] on the same leaf $\L$, according to the billiard reflection off $\E_k$, if that ellipse is not the boundary of any of the remaining leaves;
	\item[(R2)] on the leaf $\sigma_k(\mathcal{L})$, according to the billiard reflection law off the common boundary $\E_k$, if all points of both leaves $\L$ and $\sigma_k(\mathcal{L})$ are either within $\E_k$ or outside $\E_k$;
	\item[(R3)] on the leaf $\sigma_k(\mathcal{L})$, along the the same line intersecting the common boundary $\E_k$, if one of the two leaves $\mathcal{L}$, $\sigma_k(\mathcal{L})$ is within $\E_k$ and the other one outside that ellipse.	
\end{itemize}
\end{itemize}
We illustrate the construction and application of those rules in the following example.

\begin{example}\label{ex:1}
	The billiard book consists of three leaves, $\mathcal{L}_1$,  $\mathcal{L}_2$, $\mathcal{L}_3$: the leaf $\mathcal{L}_1$ is the annulus between the ellipses $\E_{1}$ and $\E_{2}$, and 
	the leaves $\mathcal{L}_2$, $\mathcal{L}_3$ are both elliptic disks with the boundary $\E_{2}$. 
	Those leaves have the common boundary $\E_2$ and the gluing permutation is $\sigma_2=(\L_1\L_2\L_3)$.
	For shortness and simplicity of notation, we will use here and in the examples from the following Section \ref{sec:Examples}, the indices of the leaves in the permutation, instead of leaves themselves, so we denote $\sigma_2=(123)$.
	
	The leaves and a segments of sample trajectories are shown in Figure \ref{fig:Ex1}.
	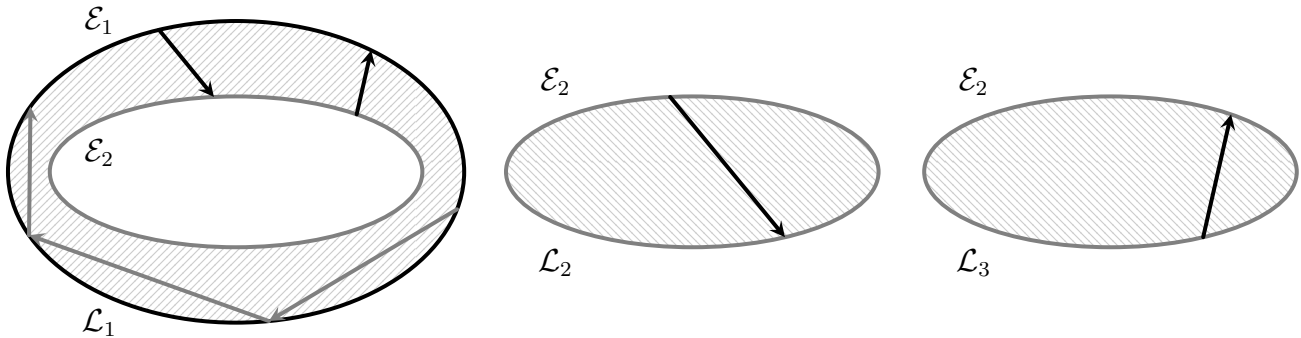
\begin{figure}[htp]
		\centering
		\begin{tikzpicture}[line cap=round,line join=round,>=stealth,x=1.0cm,y=1.0cm]
		\draw [line width=1.5pt,color=black,pattern=north east lines,pattern color=gray!40] (0.,0.) ellipse (3.cm and 2.cm);
		\draw [line width=1.5pt, 
		color=gray, fill=white, fill opacity=1.0] (0.,0.) ellipse (2.45cm and 1.cm);
		\draw [line width=1.5pt,  color=gray, pattern=north west lines, pattern color=gray!40] (6.,0.) ellipse (2.45cm and 1.cm);
		\draw [line width=1.5pt, color=gray, pattern=north west lines, pattern color=gray!40] (11.5,0) ellipse (2.45cm and 1.cm);
		\draw [->,line width=1.5pt] (-1.013,1.883)-- (-0.291,0.993);
		\draw [->,line width=1.5pt] ($(6,0)+(-0.291,0.993)$)-- ($(6,0)+(1.219,-0.867)$); 
		\draw [->,line width=1.5pt] ($(11.5,0)+(1.219, -0.867)$)-- ($(11.5,0)+(1.586, .762)$); 
		\draw [->,line width=1.5pt] (1.586, .762)-- (1.777, 1.611);
		\draw [->,line width=1.5pt,gray] (2.906,-0.4967)-- (0.432,-1.979);
		\draw [->,line width=1.5pt,gray] (0.432,-1.979)-- (-2.723,-0.840);
		\draw [->,line width=1.5pt,gray] (-2.723,-0.840)-- (-2.704,0.866);
		\draw[color=black] (-1.8,2.0) node {$\E_{1}$};
		\draw[color=black] (-1.8,0.25) node {$\E_{2}$};
		\draw[color=black] (-1.8,-2.0) node {$\mathcal{L}_1$};
		\draw[color=black] (4.2,1.20) node {$\E_{2}$};
		\draw[color=black] (4.2,-1.20) node {$\mathcal{L}_2$};
		\draw[color=black] (9.7,1.20) node {$\E_{2}$};
		\draw[color=black] (9.7,-1.20) node {$\mathcal{L}_3$};
		\end{tikzpicture}
		\caption{The leaves $\mathcal{L}_1, \mathcal{L}_2, \mathcal{L}_3$ with the gluing permutation $\sigma_2 = (123)$ represent the billiard book from Example \ref{ex:1}, together with two sample trajectories. 
		}	\label{fig:Ex1}
	\end{figure}
	
	If the caustic is an ellipse containing $\E_2$, then the corresponding trajectories will be trajectories of billiard within $\E_1$.
	Otherwise, the trajectories are of the billiard ordered game $(\E_1,\E_2)$ with signature $(1,1)$.

The rules for the motion of the particle after hitting the boundary are here applied as follows:
\begin{itemize}
\item if the particle is moving on $\L_1$ and hits its boundary $\E_1$, then according to rule (R1), it continues motion on $\L_1$ according to the billiard reflection law off $\E_1$;
\item if the particle is on $\L_2$ and hits its boundary $\E_2$, then following rule (R2), it will continue motion on the leaf $\L_3=\sigma_2(\L_2)$, according to the billiard reflection law off $\E_2$;
\item if the particle is on $\L_3$ and hits its boundary $\E_2$, then applying rule (R3), it continues its motion on $\L_1=\sigma_2(\L_3)$, along the extension of the same segment. The same rule is applied when the particle moves on $\L_1$ and hits $\E_2$: it will continue its motion on $\L_2=\sigma_2(\L_1)$ along the straight line.
\end{itemize}
\end{example}

\begin{remark}
We note that the configuration space introduced in this section is generally not a surface, since the neighbourhoods of the points on the boundaries of the leaves are not disks.
For the same reason, the trajectories of such systems will not be time reversible.	
That also implies that the billiards on the books are not Hamiltonian.
However, such billiards share a lot of properties of integrable Hamiltonian systems: the phase space will be foliatied into 2-tori and singular level sets and, moreover, the motion on each leaf is determined by Hamiltonian equations.

Furthermore, note that both billiard books and flat classical billiards can be considered as systems on a piece-wise smooth phase space, which are continuous at the preimage of the boundary. Hamiltonian property and integrability thus naturally exist on the smooth parts, and gluing of such parts will be well-defined because of the reflection law and continuity.
The problem of the extension of the smooth and symplectic structures to the neighbourhood of the preimage of the boundary is nontrivial and it is connected with the general procedure of the Hamiltonian gluing introduced by V.~Lazutkin \cite[Chapter I, Section 4.6]{Lazutkin}.
That procedure is based on transversality property and can be applied to a large subclass of classical billiards, which can be extended to a wide subclass of billiard books.
See also a recent paper \cite{Kudr2015}.

We also mention \emph{topological billiards} -- a subclass of billiard books
 introduced by V.~Vedyushkina \cite{Fokicheva2015}, which satisfy the reversibility property, since only two leaves can be glued by each edge.
\end{remark}

In the next section, we analyse in detail more examples of books obtained in the described way.

\section{Billiard ordered games of small length} \label{sec:Examples}

In this section, we will list all billiard order games of length $n\le3$ and construct examples of billiard books which realise those games.
We also give examples of billiard books that realize some billiard ordered games of length $4$.

Note that the only billiard ordered game of length $1$ is the billiard within an ellipse.

\subsection{The games of length 2}

For a pair of confocal ellipses $(\E_{1},\E_2)$, such that $\E_2$ is within $\E_1$, there are two games: with signatures $(1,1)$ and $(1,-1)$, see Remarks \ref{rem:E1} and \ref{rem:minusone}.
The game with the signature $(1,-1)$ is the billiard within the annulus between those two ellipses, thus we don't need a book in order to model the corresponding dynamics.

In the game with signature $(1,1)$, the reflections are always from the inside and alternate between $\E_{1}$ and $\E_{2}$. 
A standard billiard desk, which can be embedded into the plane, cannot any more be constructed for such a system.

The first example for a billiard book for the game with signature $(1,1)$ was presented in Example \ref{ex:1}.
Another book that can serve as a configuration space for that billiard order game is given in the following example.

\begin{example}\label{ex:1b}
Consider now the book consisting of four leaves: $\mathcal{L}_1$,  $\mathcal{L}_2$, $\mathcal{L}_3$ as in Example \ref{ex:1}, and an additional leaf $\mathcal{L}_4$, which is an identical copy of $\L_1$, with the gluing permutations $\sigma_1=(14)$, $\sigma_2=(1234)$, see Figure \ref{fig:Ex1b}.
The trajectories on this book are as follows:
\begin{itemize}
\item If the caustic is a confocal ellipse between $\E_1$ and $\E_2$, then the billiard particle reflects only off $\E_1$, and can be only on leaves $\L_1$ and $\L_4$.
Each time when it hits the boundary $\E_1$, the particle switches between those two leaves, according to the (R2) presented in Section \ref{sec:books} and the permutation $\sigma_1=(14)$.
Two consecutive segments of such a trajectory are shown in gray in Figure \ref{fig:Ex1b}.

\item If the caustic is an ellipse contained in $\E_2$ or a hyperbola, then we have two possibilities:
\begin{itemize}
	\item If the particle, when on $\L_1$ travels towards $\E_2$, then the trajectory corresponds to the billiard game $(\E_1,\E_2)$ with signature $(1,1)$.
	The particle will travel from $\L_1$ to $\L_2$ along a straight segment intersecting $\E_2$ according to (R3), obey the billiard reflection law off $\E_2$ when passing from $\L_2$ to $\L_3$ according to (R2), then continue along the same straight segment to $\L_4$ (R3), and return to $\L_1$ after reflecting off $\E_1$ (R2).
	 Four parts of such a trajectory, one in each leaf of the book, are shown in black in Figure \ref{fig:Ex1b};
\item If the particle, when on $\L_1$ travels towards $\E_1$, then the trajectory corresponds to the billiard game $(\E_1,\E_2)$ with signature $(1,-1)$, or, equivalently, to the  the billiard in the annulus between the two ellipses.
Such a trajectory will be only within $\L_1$ and $\L_4$ and the dynamics follows (R2) at the boundaries $\E_1$ and $\E_2$.
Two consecutive segments are shown as dashed lines in Figure \ref{fig:Ex1b}.
\end{itemize}

\end{itemize}
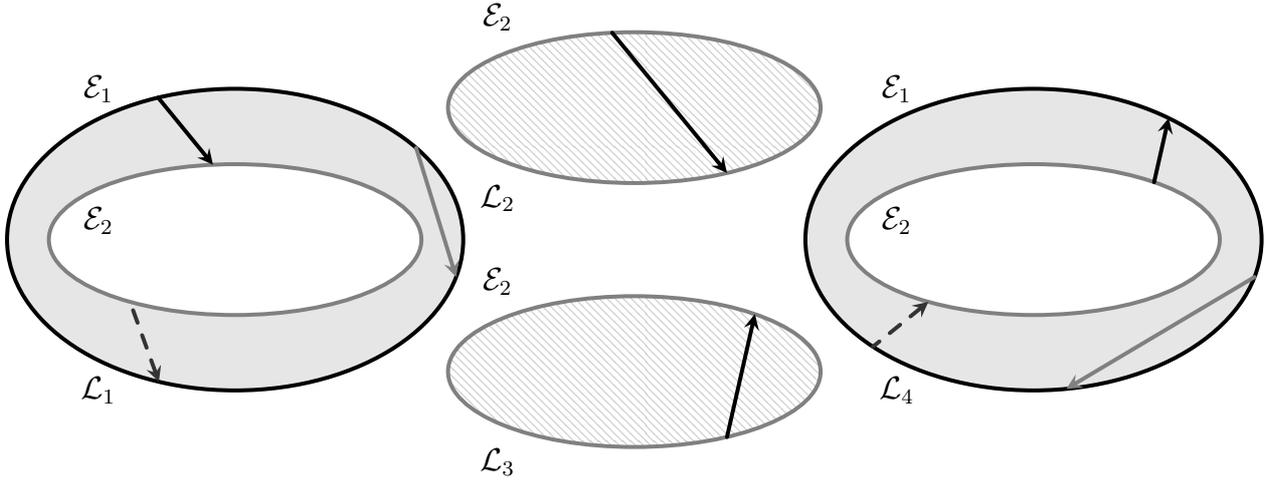
\begin{figure}[htp]
	\centering
	\begin{tikzpicture}[line cap=round,line join=round,>=stealth,x=1.0cm,y=1.0cm]
\draw [line width=1.5pt,color=black,
fill=gray!20,
] (0.,0.) ellipse (3.cm and 2.cm);
\draw [line width=1.5pt, 
color=gray, fill=white, fill opacity=1.0] (0.,0.) ellipse (2.45cm and 1.cm);
\draw [line width=1.5pt, color=gray, pattern=north west lines, pattern color=gray!40] (5.25,1.75) ellipse (2.45cm and 1.cm); 
\draw [line width=1.5pt, color=gray,pattern=north west lines,pattern color=gray!40] ($(5.25,2.25)+(0.,-4)$) ellipse (2.45cm and 1.cm);
\draw [line width=1.5pt,color=black,fill=gray!20
] ($(4.5,4)+(6.,-4)$) ellipse (3.cm and 2.cm);
\draw [line width=1.5pt,  color=gray, fill=white, fill opacity=1.0] ($(4.5,4)+(6,-4)$) ellipse (2.45cm and 1.cm);
\draw [->,line width=1.5pt] (-1.013,1.883)-- (-0.291,0.993);
\draw [->,line width=1.5pt] ($(-0.75,1.75)+(5.709,0.993)$)-- ($(-0.75,1.75)+(7.219,-0.867)$); %
\draw [->,line width=1.5pt] ($(5.25,2.25)+(1.219, -4.867)$)-- ($(5.25,2.25)+(1.586, -3.238)$);
\draw [->, gray, line width=1.5pt] (2.381,1.216)--(2.906,-0.4967);
\draw [->, gray, line width=1.5pt] ($(10.5,0)+(2.906,-0.4967)$)-- ($(10.5,0)+(0.432,-1.979)$); 
\draw [->,line width=1.5pt] ($(4.5,4)+(7.586, -3.238)$)-- ($(4.5,4)+(7.777, -2.389)$);
\draw [<-, black!80, dash pattern=on 5pt off 5pt, line width=1.5pt] (-1.002,-1.885)-- (-1.382,-0.826);
\draw [<-, black!80, dash pattern=on 5pt off 5pt, line width=1.5pt] ($(10.5,0)+(-1.382,-0.826)$)-- ($(10.5,0)+(-2.112,-1.420)$); 
\draw[color=black] (-1.8,2.0) node {$\E_{1}$};
\draw[color=black] (-1.8,0.25) node {$\E_{2}$};
\draw[color=black] (-1.8,-2.0) node {$\mathcal{L}_1$}; 
\draw[color=black] ($(-0.75,1.75)+(4.2,1.20) $) node {$\E_{2}$};
\draw[color=black] ($(-0.75,1.75)+(4.2,-1.20)$) node {$\mathcal{L}_2$}; 
\draw[color=black] ($(5.25,2.25)+(-1.8, -2.8)$) node {$\E_{2}$};
\draw[color=black] ($(5.25,2.25)+(-1.8, -5.2)$) node {$\mathcal{L}_3$};
\draw[color=black] ($(4.5,4)+(4.2, -2)$) node {$\E_{1}$};
\draw[color=black] ($(4.5,4)+(4.2, -3.75)$) node {$\E_{2}$};
\draw[color=black] ($(4.5,4)+(4.2, -6)$) node {$\mathcal{L}_4$}; 
\end{tikzpicture}
	\caption{The leaves $\mathcal{L}_1, \mathcal{L}_2, \mathcal{L}_3, \mathcal{L}_4$ and sample trajectories corresponding to the gluing permutations $\sigma_1=(14)$ and $\sigma_2 = (1234)$. See Example \ref{ex:1b}. }
	\label{fig:Ex1b}
\end{figure}
\end{example}

\begin{remark}
Both Examples \ref{ex:1} and \ref{ex:1b} are constructed with the aim to realize the same billiard ordered game.
It is interesting to note that the obtained billiard books allow different dynamics: the book from Example \ref{ex:1b} allows all billiard games which are present in Example \ref{ex:1}, and, in addition to them, the billiard in the annulus bounded by $\E_1$ and $\E_2$. 
\end{remark}

\subsection{The games of length 3}

In a billiard game of length $3$, either all reflections are from inside or only one of them is from outside.
We will here assume that the ellipses are $\E_1$, $\E_2$, $\E_3$ and that $\E_3$ is within $\E_2$, which is within $\E_1$.

In Examples \ref{ex:111}--\ref{ex:111c} we construct billiard books where the billiard ordered game with signature $(1,1,1)$ will be realised.
In that setting, there are two billiard ordered games: $(\E_1,\E_2,\E_3)$ and $(\E_1,\E_3,\E_2)$.
In Examples \ref{ex:111} and \ref{ex:111c}, the first of them is realised and in Example \ref{ex:111b}, the second one.

In Examples \ref{ex:11-1}--\ref{ex:11-1d} we construct billiard books where the billiard game with one reflection from outside is realised.
According to Remark \ref{rem:minusone}, the reflection from outside then must be off the innermost ellipse $\E_3$.
Thus, there are two possible billiard games: $(\E_1,\E_2,\E_3)$ with signature $(1,1,-1)$ and $(\E_1,\E_3,\E_2)$ with signature $(1,-1,1)$.

\begin{example}\label{ex:111}
The billiard book consists of five leaves:
$\mathcal{L}_1$ is the annulus between $\E_1$ and $\E_2$, $\L_2$ is the elliptic disk within $\E_2$, $\L_3$ is the annulus between $\E_2$ and $\E_3$, and $\L_4$, $\L_5$ are two copies of the disk within $\E_3$, with the gluing permutations $\sigma_2 = (123)$, $\sigma_3 = (345)$, see Figure \ref{fig:Ex111}.
The trajectories on this book can be one of the following:
\begin{itemize}
\item
If the caustic is an ellipse between $\E_1$ and $\E_2$, the particle moves only on $\L_1$ and its trajectories are billiard trajectories within $\E_1$.
Two consecutive segments of such a trajectory are shown in gray within leaf $\L_1$ in Figure \ref{fig:Ex111}.
\item
If the caustic is an ellipse between $\E_2$ and $\E_3$, the particle moves only on $\L_1$, $\L_2$, $\L_3$ and its trajectories are of the billiard ordered game $(\E_1,\E_2)$ with signature $(1,1)$.
A few segments of such a trajectory are shown as dashed lines in Figure \ref{fig:Ex111}.
\item
In all other cases, the trajectories are of the billiard ordered game $(\E_1,\E_2,\E_3)$ with signature $(1,1,1)$.
A few segments of such a trajectory are shown as solid black lines in Figure \ref{fig:Ex111}.
\end{itemize}
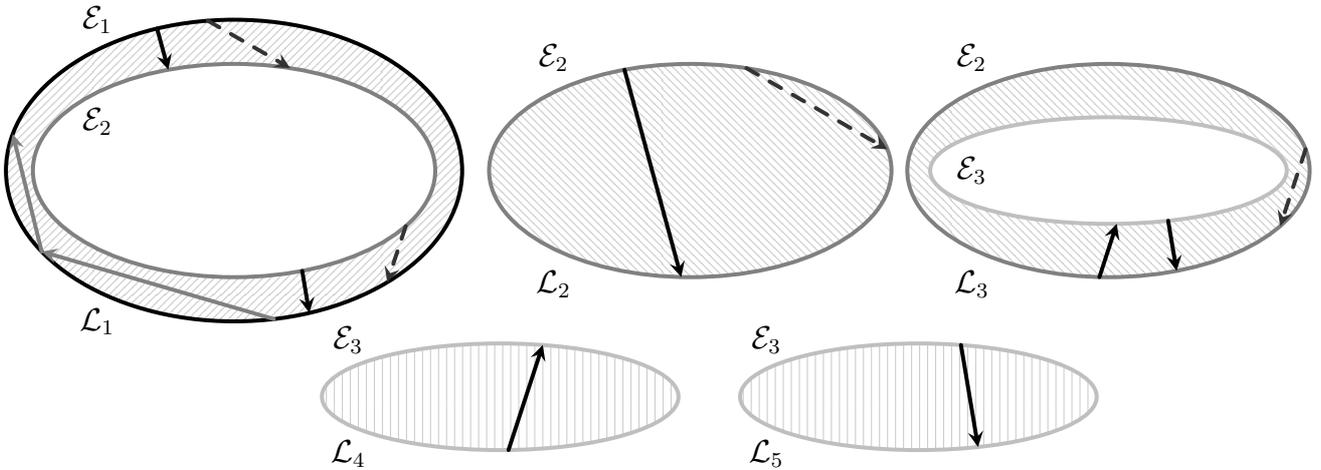
\begin{figure}[htp]
	\begin{tikzpicture}[line cap=round,line join=round,>=stealth,x=1.0cm,y=1.0cm]
\draw [line width=1.5pt,color=black,fill=gray,pattern=north east lines,pattern color=gray!40] (0.,0.) ellipse (3.cm and 2.cm);
\draw [line width=1.5pt, color=gray, fill=white, fill opacity=1.0] (0.,0.) ellipse (2.646cm and 1.414cm);
\draw [line width=1.5pt, color=gray, pattern=north west lines, pattern color=gray!40] (6.,0.) ellipse (2.646cm and 1.414cm);
\draw [line width=1.5pt, color=gray,pattern=north west lines,pattern color=gray!40] (11.5,0.) ellipse (2.646cm and 1.414cm);
\draw [line width=1.5pt, color=gray!50, fill=white, fill opacity=1.0] (11.5,0.) ellipse (2.345cm and 0.707cm);
\draw [line width=1.5pt, color=gray!50, pattern=vertical lines, pattern color=gray!40] (3.5,-3) ellipse (2.345cm and 0.707cm);
\draw [line width=1.5pt, color=gray!50, pattern=vertical lines, pattern color=gray!40] (9,-3) ellipse (2.345cm and 0.707cm);
\draw [->,line width=1.5pt] (-1.013,1.883)-- (-0.865,1.337);
\draw [->,line width=1.5pt] ($(6,0)+(-0.865,1.337)$)-- ($(6,0)+(-0.119,-1.413)$);
\draw [->,line width=1.5pt] ($(11.5,0)+(-0.119,-1.413)$)-- ($(11.5,0)+(0.109,-0.706)$); 
\draw [->,line width=1.5pt] ($(3.5,-3)+(0.109,-0.706)$)-- ($(3.5,-3)+(0.560,0.687)$); 
\draw [->,line width=1.5pt] ($(9,-3)+(0.560,0.687)$)-- ($(9,-3)+(0.785,-0.666)$); 
\draw [->,line width=1.5pt] ($(11.5,0)+(0.785,-0.666)$)-- ($(11.5,0)+(0.896, -1.331)$); 
\draw [->,line width=1.5pt] (0.896, -1.331)-- (0.989,-1.888); 
\draw [->,line width=1.5pt,gray] (0.523,-1.969)-- (-2.530,-1.075);
\draw [->,line width=1.5pt,gray] (-2.530,-1.075)-- (-2.911,0.484);
\draw [->, black!80, dash pattern=on 5pt off 5pt, line width=1.5pt] (-0.345,1.987)-- (0.738,1.358);
\draw [->, black!80, dash pattern=on 5pt off 5pt, line width=1.5pt] ($(6,0)+(0.738,1.358)$)-- ($(6,0)+(2.593,0.282)$); 
\draw [->, black!80, dash pattern=on 5pt off 5pt, line width=1.5pt] ($(11.5,0)+(2.593,0.282)$)-- ($(11.5,0)+(2.258,-0.737)$); 
\draw [->, black!80, dash pattern=on 5pt off 5pt, line width=1.5pt] (2.258,-0.737)-- (2.014,-1.482);
\draw[color=black] (-1.8,2.0) node {$\E_{1}$};
\draw[color=black] (-1.8,0.65) node {$\E_{2}$};
\draw[color=black] (-1.8,-2.0) node {$\mathcal{L}_1$};
\draw[color=black] ($(6,0)+(-1.8,1.50)$) node {$\E_{2}$};
\draw[color=black] ($(6,0)+(-1.8,-1.50)$) node {$\mathcal{L}_2$};
\draw[color=black] ($(11.5,0)+(-1.8,1.50)$) node {$\E_{2}$};
\draw[color=black] ($(11.5,0)+(-1.8,0)$) node {$\E_{3}$};
\draw[color=black] ($(11.5,0)+(-1.8,-1.50)$) node {$\mathcal{L}_3$};
\draw[color=black] ($(3.5,-3)+(-2,0.75)$) node {$\E_{3}$};
\draw[color=black] ($(3.5,-3)+(-2,-0.75)$) node {$\mathcal{L}_4$}; 
\draw[color=black] ($(3.5,-3)+(3.5,0.75)$) node {$\E_{3}$};
\draw[color=black] ($(3.5,-3)+(3.5,-0.75)$) node {$\mathcal{L}_5$};
\end{tikzpicture}
	\caption{The leaves $\mathcal{L}_1, \ldots, \mathcal{L}_5$ from Example \ref{ex:111} and a sample trajectory corresponding to the gluing permutations $\sigma_2 = (123)$ and $\sigma_3 = (345)$. }
	\label{fig:Ex111}
\end{figure}
\end{example}

\begin{example}\label{ex:111b}
The billiard book has the same leaves $\L_1$, $\L_2$, $\L_3$, $\L_4$, $\L_5$ as in the Example \ref{ex:111} but the gluing permutations are $\sigma_2=(132)$ and $\sigma_3=(354)$.
In this book, if the caustic is an ellipse within $\E_3$ or a hyperbola, the trajectories are of the billiard ordered game $(\E_1,\E_3,\E_2)$ with signature $(1,1,1)$.
An example of such a trajectory is represented by black solid segments in Figure \ref{fig:Ex111c}, but the billiard particle traces it in the opposite direction.
\end{example}

\begin{example}\label{ex:111c}
The billiard book consists of the same $\L_1$, $\L_2$, $\L_3$, $\L_4$, $\L_5$ as in the Example \ref{ex:111}, and the leaf $\L_6$, which is the annulus between $\E_1$ and $\E_3$, with the gluing permutations $\sigma_1 = (16)$, $\sigma_2 = (123)$, $\sigma_3 = (3456)$, see Figure \ref{fig:Ex111c}.
Trajectories on this book can be:
\begin{itemize}
\item
If the caustic is an ellipse between $\E_1$ and $\E_2$, the particle moves only on leaves $\L_1$ and $\L_6$ and the trajectories correspond to the billiard within $\E_1$.
Two consecutive segments of such a trajectory are shown in gray in Figure \ref{fig:Ex111c}.
\item 
If the caustic is an ellipse between $\E_2$ and $\E_3$, the particle moves only on leaves $\L_1$, $\L_2$, $\L_3$, $\L_6$ and the trajectories correspond to the billiard ordered game $(\E_1,\E_1,\E_2)$ with signature $(1,1,1)$.
Dashed segments in Figure \ref{fig:Ex111c} are a part of such a trajectory.
\item
If the caustic is an ellipse within $\E_3$, or a hyperbola, we can have the following:
\begin{itemize}
	\item If the billiard particle on $\L_1$ moves towards $\E_2$, then trajectory will correspond to the billiard ordered game $(\E_1,\E_2,\E_3)$ with signature $(1,1,1)$. Solid black segments in Figure \ref{fig:Ex111c} are a part of such a trajectory.
	\item If the billiard particle on $\L_1$ moves towards $\E_1$, then trajectory will correspond to the billiard ordered game $(\E_1,\E_3)$ with signature $(1,-1)$, i.e.~the billiard in the annulus between $\E_1$ and $\E_3$.
\end{itemize}
\end{itemize}
 \begin{figure}[htp]
	\begin{tikzpicture}[line cap=round,line join=round,>=stealth,x=1.0cm,y=1.0cm]
\draw [line width=1.5pt,color=black,fill=gray,pattern=north east lines,pattern color=gray!50] (0.,0.) ellipse (3.cm and 2.cm);
\draw [line width=1.5pt,  color=gray,fill=gray!30] (0.,0.) ellipse (2.646cm and 1.414cm);
\draw [line width=1.5pt, color=gray,fill=gray!30] (6,0.) ellipse (2.646cm and 1.414cm);
\draw [line width=1.5pt, color=gray!70, fill=white, fill opacity=1.0] (6,0.) ellipse (2.345cm and 0.707cm);
\draw [line width=1.5pt, color=gray!70, pattern=vertical lines,pattern color=gray!30] (6,0.) ellipse (2.345cm and 0.707cm);
\draw [line width=1.5pt,color=black,fill=gray,pattern=north east lines,pattern color=gray!50] (12,0) ellipse (3.cm and 2.cm);
\draw [line width=1.5pt, color=gray,fill=gray!30] (12,0.) ellipse (2.345cm and 0.707cm);
\draw [->,line width=1.5pt] (1.013,-1.883)-- (0.865,-1.337);
\draw [->,line width=1.5pt] (0.865,-1.337)-- (0.119,1.413);
\draw [->,line width=1.5pt] ($(6,0)+(0.119,1.413)$)-- ($(6,0)+(-0.109,0.706)$);
\draw [->,line width=1.5pt] ($(6,0)+(-0.109,0.706)$)-- ($(6,0)+(-0.560,-0.687)$);
\draw [->,line width=1.5pt] ($(12,0)+(-0.560,-0.687)$)-- ($(12,0)+(-0.785,0.666)$);
\draw [->,line width=1.5pt] ($(12,0)+(-0.785,0.666)$)-- ($(12,0)+(-0.989,1.888)$);
\draw [->,line width=1.5pt,gray] (-2.151,-1.394)-- (-2.997,-0.090);
\draw [->,line width=1.5pt,gray] ($(12,0)+(-2.997,-0.090)$)-- ($(12,0)+(-2.381,1.217)$);
\draw [->, black!80, dash pattern=on 5pt off 5pt, line width=1.5pt] (-0.345,1.987)-- (0.738,1.358);
\draw [->, black!80, dash pattern=on 5pt off 5pt, line width=1.5pt] (0.738,1.358)-- (2.593,0.282); 
\draw [->, black!80, dash pattern=on 5pt off 5pt, line width=1.5pt] ($(6,0)+(2.593,0.282)$)-- ($(6,0)+(2.258,-0.737)$); 
\draw [->, black!80, dash pattern=on 5pt off 5pt, line width=1.5pt] (2.258,-0.737)-- (2.014,-1.482);
\draw [->, black!80, dash pattern=on 5pt off 5pt, line width=1.5pt] ($(12,0)+(2.014,-1.482)$)-- ($(12,0)+(-2.830,-0.663)$); 
\draw[color=black] (-1.8,2.0) node {$\E_{1}$};
\draw[color=black] (-1.8,0.65) node {$\E_{2}$};
\draw[color=black] (0,-2.5) node {$\mathcal{L}_1\text{ and }\mathcal{L}_2$}; 
\draw[color=black] ($(6,0)+(-1.8,1.50)$) node {$\E_{2}$};
\draw[color=black] ($(6,0)+(-1.8,0)$) node  {$\E_{3}$};
\draw[color=black] ($(6,0)+(0,-2.50)$) node {$\mathcal{L}_3\text{ and }\mathcal{L}_4$}; 
\draw[color=black] ($(12,0)+(-1.8,2.0)$) (10.2,2.0) node {$\E_{1}$};
\draw[color=black] ($(12,0)+(-1.8,0)$) node  {$\E_{3}$};
\draw[color=black] ($(12,0)+(0,-2.50)$) node {$\mathcal{L}_5\text{ and }\mathcal{L}_6$};
\end{tikzpicture}
	\caption{The leaves $\mathcal{L}_1, \ldots, \mathcal{L}_6$ from Example \ref{ex:111c} and sample trajectories corresponding to the gluing permutations $\sigma_1 =(16)$, $\sigma_2 = (123)$, and $\sigma_3 = (3456)$. }
	\label{fig:Ex111c}
\end{figure}
\end{example}

\begin{example}\label{ex:11-1}
The billiard book has three leaves, as shown in Figure \ref{fig:Ex11-1}:
$\L_1$ is the annulus between $\E_1$ and $\E_2$, $\L_2$ is the disc with boundary $\E_2$, and $\L_3$ the annulus between $\E_2$ and $\E_3$, with the gluing permutation $\sigma_2 = (123)$. 
The trajectories on that book are:
\begin{itemize}
\item
If the caustic is an ellipse between $\E_1$ and $\E_2$, the particle moves only on $\L_1$ and the trajectories will correspond to the billiard within $\E_1$.
A part of such a trajectory is shown as gray segments in Figure \ref{fig:Ex11-1}.
\item
If the caustic is an ellipse between $\E_2$ and $\E_3$,  the trajectories correspond to the billiard game $(\E_1,\E_2)$ with signature $(1,1)$, see the example shown as dashed segments in Figure \ref{fig:Ex11-1}.
\item
In all other cases, the trajectories correspond to the game $(\E_{1}, \E_{2},\E_{3})$ with signature $(1,1,-1)$, see the solid black segments in Figure \ref{fig:Ex11-1}.
\end{itemize}
\begin{figure}[htp]
\begin{tikzpicture}[line cap=round,line join=round,>=stealth,x=1.0cm,y=1.0cm]
\draw [line width=1.5pt,color=black,fill=gray,pattern=north east lines,pattern color=gray!40] (0.,0.) ellipse (3.cm and 2.cm);
\draw [line width=1.5pt, color=gray, fill=white, fill opacity=1.0] (0.,0.) ellipse (2.646cm and 1.414cm);
\draw [line width=1.5pt,  color=gray, fill=gray!20] (6.,0.) ellipse (2.646cm and 1.414cm);
\draw [line width=1.5pt, color=gray,fill=gray!20] (11.5,0.) ellipse (2.646cm and 1.414cm);
\draw [line width=1.5pt, color=gray!50, fill=white, fill opacity=1.0] (11.5,0.) ellipse (2.345cm and 0.707cm);
\draw [->,line width=1.5pt] (-1.345,1.788)-- (-1.051,1.298);
\draw [->,line width=1.5pt]  ($(6,0)+(-1.051,1.298)$)--  ($(6,0)+(0.560,-1.382)$);
\draw [->,line width=1.5pt] ($(11.5,0)+(0.560,-1.382)$)-- ($(11.5,0)+(0.790,-0.666)$); 
\draw [->,line width=1.5pt] ($(11.5,0)+(0.790,-0.666)$)-- ($(11.5,0)+(1.144,-1.275)$); 
\draw [->,line width=1.5pt] (1.144,-1.275)-- (1.425,-1.760);
\draw [->,line width=1.5pt,gray] (0.523,-1.969)-- (-2.530,-1.075);
\draw [->,line width=1.5pt,gray] (-2.530,-1.075)-- (-2.911,0.484);
\draw [->, black!80, dash pattern=on 5pt off 5pt, line width=1.5pt] (-0.345,1.987)-- (0.738,1.358);
\draw [->, black!80, dash pattern=on 5pt off 5pt, line width=1.5pt] ($(6,0)+(0.738,1.358)$)-- ($(6,0)+(2.593,0.282)$); 
\draw [->, black!80, dash pattern=on 5pt off 5pt, line width=1.5pt] ($(11.5,0)+(2.593,0.282)$)-- ($(11.5,0)+(2.258,-0.737)$); 
\draw [->, black!80, dash pattern=on 5pt off 5pt, line width=1.5pt] (2.258,-0.737)-- (2.014,-1.482);
\draw[color=black] (-1.8,2.0) node {$\E_{1}$};
\draw[color=black] (-1.8,0.65) node {$\E_{2}$};
\draw[color=black] (-1.8,-2.0) node {$\mathcal{L}_1$}; 
\draw[color=black]  ($(6,0)+(-1.8,1.5)$) node {$\E_{2}$};
\draw[color=black] ($(6,0)+(-1.8,-1.5)$) node {$\mathcal{L}_2$}; 
\draw[color=black] ($(11.5,0)+(-1.8,1.50)$) node {$\E_{2}$};
\draw[color=black] ($(11.5,0)+(-1.8,0)$) node {$\E_{3}$};
\draw[color=black] ($(11.5,0)+(-1.8,-1.50)$) node {$\mathcal{L}_3$}; 
\end{tikzpicture}
\caption{The leaves $\mathcal{L}_1, \mathcal{L}_2, \mathcal{L}_3$ from Example \ref{ex:11-1} and a few trajectories on the book with gluing $\sigma_2=(123)$. }
\label{fig:Ex11-1}
\end{figure}
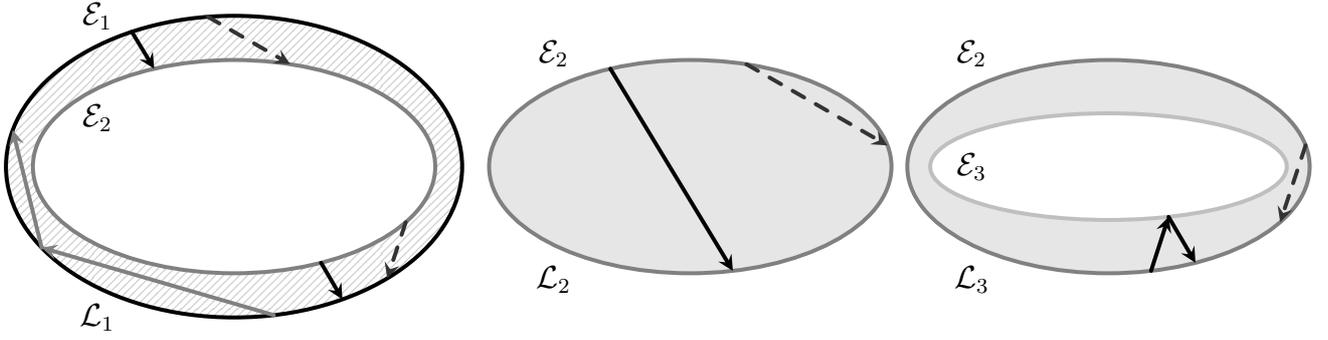
\end{example}

\begin{example}\label{ex:11-1b}
The book has the same leaves as in Example \ref{ex:11-1}, but the gluing permutation is $\sigma_2=(132)$.
In that book, if the caustic is  an ellipse contained within $\E_3$ or a hyperbola, the trajectories correspond to the billiard ordered game $(\E_1,\E_3,\E_2)$ with signature $(1,-1,1)$.
A trajectory is shown in Figure \ref{ex:11-1}, just the billiard particle will trace it in reverse direction.
\end{example}

\begin{example}\label{ex:11-1c}
The billiard book consists of the leaves $\L_1$, $\L_2$, $\L_3$ as defined in Example \ref{ex:11-1} and the fourth leaf $\L_4$, which is the annulus between $\E_1$ and $\E_3$, with the gluing permutations $\sigma_1 = (14)$, $\sigma_2 = (123)$, $\sigma_3 = (34)$, see Figure \ref{fig:Ex11-1c}.
The trajectories are:
\begin{itemize}
	\item 
If the caustic is an ellipse between $\E_1$ and $\E_2$, the particle moves on $\L_1$ and $\L_4$ and the trajectories will correspond to the billiard within $\E_1$, see the gray segments in Figure \ref{fig:Ex11-1c}.
\item
If the caustic is an ellipse between $\E_2$ and $\E_3$, the trajectories correspond to the billiard game $(\E_1,\E_1,\E_2)$ with signature $(1,1,1)$, see the dashed segments in Figure \ref{fig:Ex11-1c}.
\item
For all other caustics, there are two possibilities.
\begin{itemize}
	\item 
In the first one, when the particle is on $\L_1$, it moves towards $\E_2$. 
The trajectories then correspond to the game $(\E_{1}, \E_{2},\E_{3})$ with signature $(1,1,-1)$, as shown by black segments in Figure \ref{fig:Ex11-1c}.
\item
The second possibility is that the particle moves towards $\E_1$, when it is on $\L_1$. In that case, the trajectories do not intersect $\L_2$ and they correspond to the billiard ordered game $(\E_1,\E_3)$ with signature $(1,-1)$, see the dotted segments in Figure \ref{fig:Ex11-1c}.
\end{itemize}
\end{itemize}
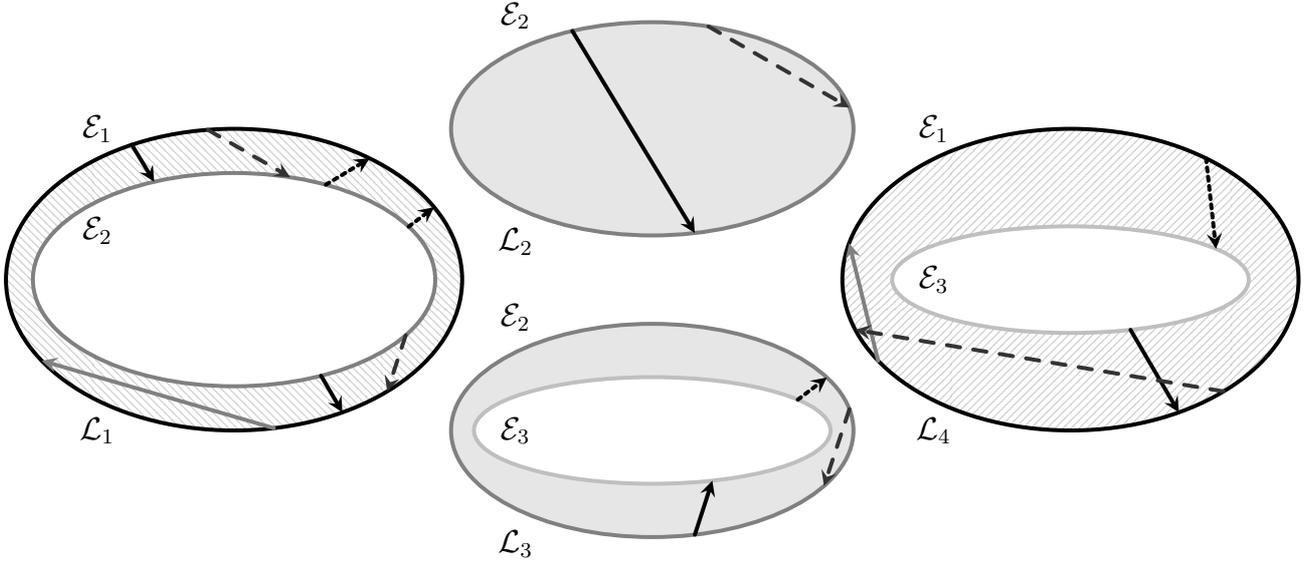
\begin{figure}[htp]
\begin{tikzpicture}[line cap=round,line join=round,>=stealth,x=1.0cm,y=1.0cm]
\draw [line width=1.5pt,color=black,pattern=north west lines,pattern color=gray!40] (0.,0.) ellipse (3.cm and 2.cm);
\draw [line width=1.5pt, color=gray, fill=white, fill opacity=1.0] (0.,0.) ellipse (2.646cm and 1.414cm);
\draw [line width=1.5pt, color=gray, fill=gray!20] (5.5,2) ellipse (2.646cm and 1.414cm);
\draw [line width=1.5pt, color=gray,fill=gray!20] (5.5,-2) ellipse (2.646cm and 1.414cm);
\draw [line width=1.5pt, color=gray!50, fill=white, fill opacity=1.0] (5.5,-2) ellipse (2.345cm and 0.707cm);
\draw [line width=1.5pt, color=black,fill=gray,pattern=north east lines,pattern color=gray!40] (11,0) ellipse (3.cm and 2.cm);
\draw [line width=1.5pt, color=gray!50, fill=white, fill opacity=1.0] (11,0) ellipse  (2.345cm and 0.707cm);
\draw [->,line width=1.5pt,gray] (0.523,-1.969)-- (-2.530,-1.075);
\draw [->,line width=1.5pt,gray] ($(11,0)+(-2.530,-1.075)$)-- ($(11,0)+(-2.911,0.484)$); 
\draw [->, black!80, dash pattern=on 5pt off 5pt, line width=1.5pt] (-0.345,1.987)-- (0.738,1.358);
\draw [->, black!80, dash pattern=on 5pt off 5pt, line width=1.5pt] ($(5.5,2)+(0.738,1.358)$)-- ($(5.5,2)+(2.593,0.282)$); 
\draw [->, black!80, dash pattern=on 5pt off 5pt, line width=1.5pt] ($(5.5,-2)+(2.593,0.282)$)-- ($(5.5,-2)+(2.258,-0.737)$); 
\draw [->, black!80, dash pattern=on 5pt off 5pt, line width=1.5pt] (2.258,-0.737)-- (2.014,-1.482);
\draw [->,line width=1.5pt] (-1.345,1.788)-- (-1.051,1.298);
\draw [->, line width=1.5pt]  ($(5.5,2)+(-1.051,1.298)$)--  ($(5.5,2)+(0.560,-1.382)$);
\draw [->,line width=1.5pt] ($(5.5,-2)+(0.560,-1.382)$)-- ($(5.5,-2)+(0.790,-0.666)$); 
\draw [->,line width=1.5pt] ($(11,0)+(0.790,-0.666)$)-- ($(11,0)+(1.425,-1.760)$); 
\draw [->,line width=1.5pt] (1.144,-1.275)-- (1.425,-1.760);
\draw [->, black, dotted,
line width=1.5pt] (1.207,1.259)-- (1.783,1.609);
\draw [->, black, dotted,
line width=1.5pt]  ($(11,0)+(1.783,1.609)$)--  ($(11,0)+(1.914,0.408)$);
\draw [->, black, dotted,
line width=1.5pt] ($(5.5,-2)+(1.914,0.408)$)-- ($(5.5,-2)+(2.296,0.702)$); 
\draw [->, black, dotted,
line width=1.5pt] ($(0,0)+(2.296,0.702)$)-- ($(0,0)+(2.631,0.960)$); 
\draw[color=black] (-1.8,2.0) node {$\E_{1}$};
\draw[color=black] (-1.8,0.65) node {$\E_{2}$};
\draw[color=black] (-1.8,-2.0) node {$\mathcal{L}_1$}; 
\draw[color=black] ($(-0.5,2)+(4.2,1.50)$) node {$\E_{2}$};
\draw[color=black] ($(-0.5,2)+(4.2,-1.50)$) node {$\mathcal{L}_2$}; 
\draw[color=black] ($(-6,-2)+(9.7,1.50)$) node {$\E_{2}$};
\draw[color=black] ($(-6,-2)+(9.7,0)$) node {$\E_{3}$};
\draw[color=black] ($(-6,-2)+(9.7,-1.50)$) node {$\mathcal{L}_3$}; 
\draw[color=black] ($(11,0)+(-1.8,2.0)$) node {$\E_{1}$};
\draw[color=black] ($(11,0)+(-1.8,0)$) node {$\E_{3}$};
\draw[color=black] ($(11,0)+(-1.8,-2.)$) node {$\mathcal{L}_4$}; 
\draw [->, black!80, dash pattern=on 5pt off 5pt, line width=1.5pt] ($(11,0)+(2.014,-1.482)$)-- ($(11,0)+(-2.829,-0.665)$); 

\end{tikzpicture}
	\caption{The leaves $\mathcal{L}_1, \mathcal{L}_2, \mathcal{L}_3,\mathcal{L}_4$ from Example \ref{ex:11-1c} and a few trajectories with the gluing permutations $\sigma_1 = (14)$, $\sigma_2 = (123)$, $\sigma_3 = (34)$.}\label{fig:Ex11-1c}
\end{figure}
\end{example}

\begin{example}\label{ex:11-1d}
The billiard book consists of the same leaves as in the Example \ref{ex:11-1c}, see Figure \ref{fig:Ex11-1c}, with the gluing permutations $\sigma_1 = (14)$, $\sigma_2 = (132)$, $\sigma_3 = (34)$. 
Then the billiard ordered game $(\E_1,\E_3,\E_2)$ with signature $(1,-1,1)$ will be realised there in the case when the caustic does not contain the smallest ellipse $\E_3$ and the particle, when it is on $\L_1$, moves towards $\E_2$.
Such a trajectory is shown by solid black segments in Figure \ref{fig:Ex11-1c}, but here it is traced in the opposite direction.
In all other cases, the corresponding dynamics will be as in Example \ref{ex:11-1c}.
\end{example}

\subsection{The games of length 4}

In this section, we present a few examples of the billiard books where billiard ordered games of length $4$ are realised.

\begin{example}\label{ex:111-1}
Consider the billiard ordered game $(\E_{1},\E_{2},\E_{1},\E_{3})$ with signature $(1,-1,1,1)$. 
The billiard book has four leaves: $\mathcal{L}_1$ is the annulus between $\E_1$ and $\E_2$, $\L_2$ is the annulus between $\E_1$ and $\E_3$, while $\L_3$, $\L_4$ are two copies of the disk with boundary $\E_3$, with the gluing permutations $\sigma_1 = (12)$ and $\sigma_3 = (234)$, see Figure \ref{fig:Ex111-1}.
 
If the caustic is an ellipse between $\E_1$ and $\E_2$, then the trajectories correspond to the billiard within $\E_1$.
If the caustic is an ellipse between $\E_2$ and $\E_3$, then the trajectories correspond to the billiard game $(\E_1,\E_1,\E_2)$ with the signature $(1,1,-1)$. 
In all other cases, the trajectories correspond to the billiard game $(\E_{1},\E_{2},\E_{1},\E_{3})$ with signature $(1,-1,1,1)$.
\begin{figure}[htp]
	\centering
	\begin{tikzpicture}[line cap=round,line join=round,>=triangle 60,x=1.0cm,y=1.0cm]
	\draw [line width=1.5pt, color=black,fill=gray!20] (0.,0.) ellipse (3.cm and 2.cm); 
	\draw [line width=1.5pt, color=gray, fill=white, fill opacity=1.0] (0.,0.) ellipse (2.646cm and 1.414cm);
	\draw[color=black] (-1.8,2.0) node {\normalsize$\E_{1}$};
	\draw[color=black] (-1.8,0.65) node {\normalsize$\E_{2}$};
	\draw[color=black] (-1.8,-2.0) node {\normalsize$\mathcal{L}_1$};
	\draw [line width=1.5pt, color=black,fill=gray!20] (6.25,0.) ellipse (3.cm and 2.cm);
	\draw [line width=1.5pt,  color=gray!50, fill=white, fill opacity=1.0] (6.25,0.) ellipse  (2.345cm and 0.707cm);
	\draw[color=black] ($(6.25,0)+(-1.8,2.0)$) node {\normalsize$\E_{1}$};
	\draw[color=black] ($(6.25,0)+(-1.8,0)$) node {\normalsize$\E_{3}$};
	\draw[color=black] ($(6.25,0)+(-1.8,-2.)$) node {\normalsize$\mathcal{L}_2$};
	\draw [line width=1.5pt, color=gray!50, fill=gray, pattern=north west lines, pattern color=gray] (12.0,0) ellipse (2.345cm and 0.707cm);
	\draw[color=black] ($(12,0)+(-1.75,0.85)$) node {\normalsize$\E_{3}$};
	\draw[color=black] ($(12,0)+(-1.75,-0.85)$) node {\normalsize$\mathcal{L}_3,\mathcal{L}_4$};
	\end{tikzpicture}
	\caption{The leaves $\mathcal{L}_1, \ldots, \mathcal{L}_4$ corresponding to the gluing permutations $\sigma_1 = (12)$ and $\sigma_3 = (234)$. See Example \ref{ex:111-1}.}
	\label{fig:Ex111-1}
\end{figure}
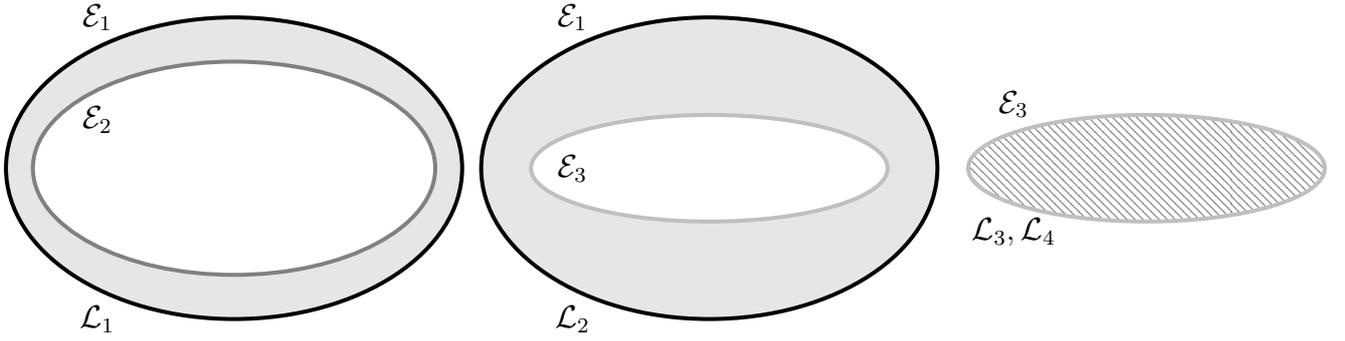

\end{example}

\begin{example}\label{ex:11-1-1}
Consider a billiard ordered game $(\E_{1},\E_{2},\E_{1},\E_{3})$ with signature $(1,-1,1,-1)$. 
The billiard book has two leaves: $\mathcal{L}_1$ is the annulus between $\E_1$ and $\E_2$, while $\L_2$ is the annulus between $\E_1$ and $\E_3$, with the gluing permutation $\sigma_1 = (12)$, see Figure \ref{fig:Ex11-1-1}.
The book constructed in this example is a topological billiard.
\begin{figure}[htp]
\centering
\begin{tikzpicture}[line cap=round,line join=round,>=triangle 60,x=1.0cm,y=1.0cm]
\draw [line width=1.5pt, color=black,fill=gray!20] (0.,0.) ellipse (3.cm and 2.cm);
\draw [line width=1.5pt, color=gray, fill=white, fill opacity=1.0] (0.,0.) ellipse (2.646cm and 1.414cm);
\draw [line width=1.5pt, color=black,fill=gray!20] (8.,0.) ellipse (3.cm and 2.cm);
\draw [line width=1.5pt, color=gray!50, fill=white, fill opacity=1.0] (8.,0.) ellipse  (2.345cm and 0.707cm);
\begin{scriptsize}
\draw[color=black] (-1.8,2.0) node {\normalsize$\E_{1}$};
\draw[color=black] (-1.8,0.65) node {\normalsize$\E_{2}$};
\draw[color=black] (-1.8,-2.0) node {\normalsize$\mathcal{L}_1$};
\draw[color=black] ($(8,0)+(-1.8,2.0)$) node {\normalsize$\E_{1}$};
\draw[color=black] ($(8,0)+(-1.8,0)$) node {\normalsize$\E_{3}$};
\draw[color=black] ($(8,0)+(-1.8,-2.)$) node {\normalsize$\mathcal{L}_2$};\end{scriptsize}
\end{tikzpicture}
\caption{The leaves $\mathcal{L}_1, \mathcal{L}_2$ corresponding to the gluing permutation $\sigma_1 = (12)$ from Example \ref{ex:11-1-1}. }
\label{fig:Ex11-1-1}
\end{figure}
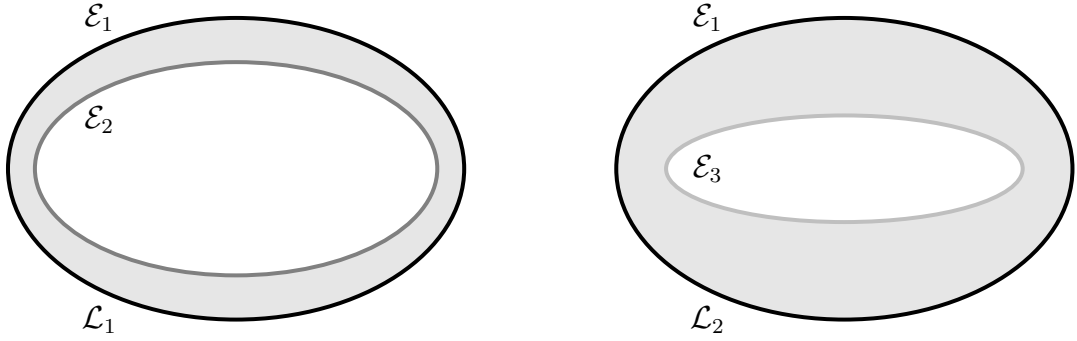

\end{example}

\section{Construction of a book for a given billiard ordered game}\label{sec:construction}

Consider a billiard ordered game 
$(\E_1,\dots,\E_n)$ with signature $(i_1, \ldots, i_n) \in \{-1,1\}^n$. 
Suppose that $\beta_1$, \dots, $\beta_n$ are parameters of those $n$ ellipses, i.e.~$\E_k=\C_{\beta_k}$, for each $k\in\{1,\dots,n\}$ and some values $\beta_1$, \dots, $\beta_n$, which are all smaller than $b$. 

We describe a general construction of a billiard book associated to this game. First, we will consider the case when no two consecutive ellipses are equal to each other: $\E_k\neq\E_{k+1}$, for each $k\in\{1,\dots,n\}$.

\begin{theorem}\label{th:algor}
Let $\E_1$, \dots, $\E_n$ be confocal ellipses from the family \eqref{eq:ConFam}, with parameters $\beta_1$, \dots, $\beta_n$.
We assume that no two consecutive parameters are equal, and also $\beta_1\neq\beta_n$.
Denote $\E_{n+1}:=\E_1$, $\E_0:=\E_n$.

Then the billiard ordered game
 $(\E_{1}, \ldots, \E_{n})$ with signature $(i_1, \ldots, i_n)$ is realised on the billiard book consisting of the following leaves:
 \begin{itemize}
 	\item[(A)]
 	the annuli $\A_0$, \dots, $\A_{n-1}$: for each $k\in\{0,\dots,n-1\}$, $\A_k$ is the annulus between ellipses $\E_k$ and $\E_{k+1}$;
 	
 	\item[(D1)] the disks $\D_k$ with boundary $\E_k$, for each $k\in\{1,\dots,n\}$ such that $i_k=1$ and exactly one of the ellipses $\E_{k-1}$, $\E_{k+1}$ is within $\E_k$;

 	\item[(D2)] the disks $\D_k'$, $\D_k''$ with boundary $\E_k$, for each $k\in\{1,\dots,n\}$ such that $i_k=1$ and
 	$\E_k$ is within $\E_{k-1}$ and $\E_{k+1}$.
 	
 \end{itemize}
The gluing permutation for the book along $\E_k$ is:
 \begin{itemize}
 	 	\item $\sigma_k=(\A_{k-1}\D_k\A_k)$ for each $k\in\{1,\dots,n\}$ satifying (D1);
 	 	\item $\sigma_k=(\A_{k-1}\D_k'\D_k''\A_k)$ for each $k\in\{1,\dots,n\}$ satifying (D2);
 	\item $\sigma_k=(\A_{k-1}\A_k)$, otherwise.
 \end{itemize}
\end{theorem}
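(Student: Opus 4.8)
The plan is to verify directly that the billiard on the constructed book, restricted to the caustic of the given game, reproduces precisely the trajectories of $(\E_1,\dots,\E_n)$ with signature $(i_1,\dots,i_n)$, and conversely that every such game trajectory lifts to the book. The organising observation is that the $n$ annuli account for the $n$ segments of one period of a game trajectory: each segment joins two consecutive boundary ellipses $\E_{k-1}$ and $\E_k$ (indices read cyclically, with $\E_0=\E_n$, $\E_{n+1}=\E_1$), and it lies in the annulus $\A_{k-1}$ between them. Since, as recalled in Section~\ref{sec:games}, every game trajectory has a single fixed caustic from the confocal family \eqref{eq:ConFam}, I would fix that caustic and follow a particle once around the cycle of annuli $\A_0\to\A_1\to\cdots\to\A_{n-1}\to\A_0$, using $\A_n=\A_0$, which holds because $\E_n=\E_0$ and $\E_{n+1}=\E_1$.

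The heart of the proof is a local analysis at each ellipse $\E_k$, whose goal is to show that the gluing $\sigma_k$ carries the incoming segment on $\A_{k-1}$ to the outgoing segment on $\A_k$ while creating exactly one genuine billiard reflection off $\E_k$, of the type dictated by $i_k$ (from inside when $i_k=1$, from outside when $i_k=-1$). First I would confirm that $\sigma_k$ is well defined, i.e.\ that the leaves appearing in its cycle are exactly the leaves of the book having $\E_k$ on their boundary, namely $\A_{k-1}$, $\A_k$ and whatever disks were attached at $\E_k$. Then I would record, for each such leaf, whether it lies inside or outside $\E_k$, and split into the three construction cases. In the ``otherwise'' case $\sigma_k=(\A_{k-1}\A_k)$ both annuli lie on the same side of $\E_k$ — outside when $i_k=-1$ (by Remark~\ref{rem:minusone}) and inside when $i_k=1$ with $\E_k$ larger than both neighbours — so rule (R2) applies and yields a single reflection of the correct type. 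In case (D1) one annulus is inside and the other outside $\E_k$, and the inserted disk $\D_k$ (inside $\E_k$) splits the passage into one straight crossing (R3) and one inside reflection (R2) before the particle continues on $\A_k$. In case (D2) both annuli lie outside $\E_k$, and the pair $\D_k',\D_k''$ produces a chord inside $\E_k$ flanked by two straight crossings (R3) with one inside reflection (R2) between them. In every case the net effect is passage $\A_{k-1}\rightsquigarrow\A_k$ with a unique genuine reflection of signature $i_k$; the straight crossings (R3) are virtual and contribute no game vertex.

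Finally I would check that the caustic is preserved along the whole book trajectory: each straight crossing (R3) keeps the supporting line unchanged, and each genuine reflection (R2) is an ordinary billiard reflection off a member of the confocal family and hence preserves the tangent confocal caustic. Consequently the lifted trajectory is everywhere tangent to the same caustic as the game, the $n$ genuine reflections occur in the prescribed cyclic order off $\E_1,\dots,\E_n$ with the prescribed signature, and reading the book trajectory only at its genuine reflections recovers the game; the converse lift is immediate by reversing the construction. I expect the main obstacle to be the careful inside/outside bookkeeping in the local analysis: one must verify that for each admissible caustic the radial sense of the trajectory is consistent as it enters and leaves the inserted disks, so that (R2) fires exactly once per ellipse with the correct side and the cycle $\A_0\to\cdots\to\A_{n-1}\to\A_0$ closes up — this is where the hypotheses $\E_k\neq\E_{k+1}$ and $\beta_1\neq\beta_n$, guaranteeing that consecutive ellipses are genuinely distinct and the annuli nondegenerate, are used.
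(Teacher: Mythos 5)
Your proposal is correct and follows essentially the same route as the paper's proof: a local case analysis at each $\E_k$ (split into the ``otherwise'', (D1) and (D2) cases, using Remark~\ref{rem:minusone} for $i_k=-1$ and rules (R2)/(R3) to track inside/outside sides), showing that the gluing $\sigma_k$ routes the particle from $\A_{k-1}$ to $\A_k$ with exactly one genuine reflection of sign $i_k$, and then closing the cycle $\A_0\to\cdots\to\A_{n-1}\to\A_0$. Your added remarks on caustic preservation and the converse lift are harmless extras that the paper leaves implicit.
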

\begin{proof}
Suppose that the caustic is a an ellipse contained within all boundaries $\E_1$, \dots, $\E_n$ or hyperbola, and that the particle is on the leaf $\A_{k-1}$, moving towards $\E_{k}$.

If $i_k=-1$, then $\E_{k}$ is within $\E_{k-1}$ and $\E_{k+1}$, see Remark \ref{rem:minusone}, and $\sigma_k=(\A_{k-1}\A_k)$.
Note that both annuli $\A_{k-1}$, $\A_k$ are on the same side of their common boundary $\E_k$.
Thus, after hitting $\E_k$ from outside, the particle will continue motion on $\sigma_k(\A_{k-1})=\A_k$,  according to the billiard reflection law off 
$\E_k$, see rule (R2) from Section \ref{sec:books}.
Moreover, on $\A_k$, the particle moves away from $\E_k$ and towards $\E_{k+1}$.

If $i_k=1$ and (D1) is satisfied, then we have two possible cases: either $\E_{k-1}$ is within $\E_k$ and $\E_{k}$ within $\E_{k+1}$, or $\E_{k+1}$ is within $\E_k$ and $\E_{k}$ within $\E_{k-1}$.

In the first case, both $\A_{k-1}$ and $\D_k$ are on the same side of their common boundary $\E_k$.
Thus, after hitting $\E_k$ from inside, the particle will continue motion on $\sigma_k(\A_{k-1})=\D_k$,  according to the billiard reflection law off 
$\E_k$, see rule (R2) from Section \ref{sec:books}.
Since $\D_k$ and $\A_{k}$ are on different sides of their commond boundary $\E_k$, according to rule (R3), the particle will pass along the extension of the straight segment from $\D_k$ to $\sigma_k(\D_k)=\A_{k}$ and on $\A_{k}$ will be moving away from $\E_{k}$ and towards $\E_{k+1}$.
In the second case, one can similarly see that the particle moves along a straght line across $\E_k$ from $\A_{k-1}$ to $\D_k$, reflects off $\E_{k}$ from inside and continues its motion on $\A_{k}$ towards $\E_{k+1}$.

If $i_k=1$ and (D2) is satisfied, then the particle crosses $\E_k$ along a straight segment while passing from $\A_{k-1}$ to $\sigma_k(\A_{k-1})=\D_k'$ according to rule (R3), then reflects from inside $\E_k$ and continues on $\sigma_k(\D_k')=\D_k''$ because of rule (R2), and crosses $\E_{k}$ along the straight segment to continue on $\sigma_k(\D_k'')=\A_k$ according to rule (R3).
On $\A_k$, the particle moves towards the boundary $\E_{k+1}$.

If $i_k=1$ and none of (D1), (D2) is satisfied, then both $\E_{k-1}$, $\E_{k+1}$ are within $\E_k$.
According to rule (R2), the particle will reflect off $\E_k$ from within and continue its motion on $\A_k$, moving there towards $\E_{k+1}$.

We conclude that, in each of the cases, the particle which moves on $\A_{k-1}$ towards $\E_{k}$, will reflect once off $\E_k$: from within if $i_k=1$ and from outside if $i_k=-1$, and along next segment will move on $\A_{k}$ towards $\E_{k+1}$.
By simple induction, we get that the motion which starts on $\A_0$ towards $\E_1$ will correspond to the required billiard ordered game.
\end{proof}

Using Theorem \ref{th:algor} we can get the following estimate on the number of the leaves in our construction:

\begin{corollary}\label{cor:numleaves} 
Assume all as in Theorem \ref{th:algor}.
Suppose that $s$ is the number of all ellipses $\E_k$, $k\in\{1,\dots,n\}$, such that $\E_k$ is within $\E_{k-1}$ and $\E_{k+1}$.

Then number $N$ of leaves in the billiard book constructed in Theorem \ref{th:algor} satisfies: 
$$2n-2s\le N\le 2n.$$
\end{corollary}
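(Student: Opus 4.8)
The plan is to compute $N$ exactly rather than merely bound it, by classifying each index $k\in\{1,\dots,n\}$ according to the local nesting behaviour of the ellipse parameters. Recall that $\E_k=\C_{\beta_k}$ and that, for confocal ellipses, $\E_k$ lies within $\E_j$ precisely when $\beta_k>\beta_j$. Viewing $\beta_1,\dots,\beta_n$ as a cyclic sequence (with $\beta_{n+1}=\beta_1$, $\beta_0=\beta_n$), the hypotheses of Theorem \ref{th:algor} guarantee that no two cyclically consecutive parameters coincide, so each $\E_k$ is exactly one of the following: a \emph{local maximum} of the nesting, meaning $\E_k$ lies within both $\E_{k-1}$ and $\E_{k+1}$; a \emph{local minimum}, meaning both neighbours lie within $\E_k$; or a \emph{monotone} point, meaning exactly one neighbour lies within $\E_k$.

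First I would read off the leaf count directly from the construction. There are always the $n$ annuli $\A_0,\dots,\A_{n-1}$, while case (D1) contributes one disk and case (D2) contributes two disks. Writing $d_1$ and $d_2$ for the number of indices satisfying (D1) and (D2) respectively, this gives $N=n+d_1+2d_2$. Since (D1) is precisely the set of monotone points with $i_k=1$ and (D2) is precisely the set of local maxima with $i_k=1$, the problem reduces to counting these two types.

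The key step is a counting fact about the cyclic sequence. Because the signs of the consecutive differences $\beta_{k+1}-\beta_k$ form a cyclic word in $\{+,-\}$ with no zero entries, the number of ascents immediately followed by descents equals the number of descents immediately followed by ascents; hence the number of local maxima equals the number of local minima, and by definition this common value is $s$. Next I would invoke Remark \ref{rem:minusone}: every external reflection forces $\E_k$ to lie within both neighbours, so each index with $i_k=-1$ is a local maximum, and conversely no local minimum or monotone point can carry $i_k=-1$. Denoting by $m$ the number of indices with $i_k=-1$, the $s$ local maxima split into $m$ with $i_k=-1$ and $d_2=s-m$ with $i_k=1$; meanwhile all $s$ local minima and all $n-2s$ monotone points have $i_k=1$, so $d_1=n-2s$.

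Substituting yields $N=n+(n-2s)+2(s-m)=2n-2m$. The corollary then follows at once from $0\le m\le s$: the bound $N\le 2n$ comes from $m\ge0$, and the bound $N\ge 2n-2s$ from $m\le s$, the latter being equivalent to $d_2=s-m\ge0$. The only genuine obstacle is the combinatorial lemma that a cyclic sequence of distinct consecutive values has equally many local maxima and minima; once that and Remark \ref{rem:minusone} are in hand, the remainder is bookkeeping built directly on the leaf inventory of Theorem \ref{th:algor}.
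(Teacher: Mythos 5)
Your proof is correct and follows essentially the same route as the paper's: classify each index $k$ as a local maximum, local minimum, or monotone point of the nesting order, count the leaves of Theorem \ref{th:algor} contributed by each class, and combine the cyclic-sequence fact that local maxima and minima are equinumerous (each equal to $s$) with Remark \ref{rem:minusone} to pin down where $i_k=-1$ can occur. The only difference is that your bookkeeping yields the exact count $N=2n-2m$, with $m$ the number of external reflections in the signature, which is slightly sharper than the paper's argument (the paper merely brackets the local-maxima contribution between $s$ and $3s$ and adds up); both then give the stated inequalities $2n-2s\le N\le 2n$.
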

\begin{proof}
Note that the number of ellipses $\E_k$, $k\in\{1,\dots,n\}$, such that $\E_{k-1}$ and $\E_{k+1}$ are within $\E_k$ is also equal to $s$.

For each ellipse $\E_k$ which is within exactly one of $\E_{k-1}$, $\E_{k+1}$, there are two leaves with index $k$ in the book: $\A_k$ and $\D_k$. 
There is $n-2s$ such ellipses, thus there are $2(n-2s)$ corresponding leaves in the book.

If both $\E_{k-1}$, $\E_{k+1}$ are within $\E_k$, then there is only one corresponding leaf, $\A_k$, in the book.
Since there are $s$ such ellipses $\E_k$, the total number of corresponding leaves is $s$.

If $\E_k$ is within both $\E_{k-1}$ and $\E_{k+1}$, then there is only one corresponding leaf, $\A_k$, if $i_k=-1$, or three corresponding leaves: $\A_k$, $\D_k$, $\D_k'$, if $i_k=1$.
Since there are $s$ such ellipses $\E_k$, the total number of corresponding leaves is between $s$ and $3s$.

Adding the obtained numbers up, we get the stated inequalities.
\end{proof}

\begin{remark}
	The construction from Theorem \ref{th:algor} does not always give a book with minimal number of leaves that realizes the given billiard ordered game.
	
	For example, for the billiard ordered game $(\E_1,\E_2)$ with signature $(1,1)$, the book from Theorem \ref{th:algor} will be as in Example \ref{ex:1b}, with four leaves, while we constructed in Example \ref{ex:1} a book with three leaves.
	
	For the billiard ordered game $(\E_1,\E_2)$ with signature $(1,-1)$, Theorem \ref{th:algor} will give the book consisting of two leaves: each one will be a copy of the annulus between the two ellipses.
	Since that billiard game is a classical billiard within the annulus, there is a book with only one leaf realizing that game: the leaf is the annulus.
\end{remark}

If there are repeated consecutive ellipses in a billiard game, then the corresponding signs in the signature must be positive, i.e.~all the consecutive reflections off a single ellipse are from inside.
Multiple reflections off one ellipse in the billiard book will be realised with multiple consecutive leaves in the book being identical disks with that ellipse as the boundary.
Using that idea, we can formulate a construction of a billiard book where a general billiard ordered game is realised.

\begin{theorem}\label{th:algor2}
Let $\E_1$, \dots, $\E_n$ be confocal ellipses from the family \eqref{eq:ConFam}, with parameters $\beta_1$, \dots, $\beta_n$.
Without loss of generality, we assume that  $\beta_1\neq\beta_n$.
Denote $\E_{n+1}:=\E_1$, $\E_0:=\E_n$.
	
	Then the billiard ordered game
	$(\E_{1}, \ldots, \E_{n})$ with signature $(i_1, \ldots, i_n)$ is realised on the billiard book consisting of the following leaves:
	\begin{itemize}
		\item[(A)]
	the annulus $\A_k$ between ellipses $\E_k$ and $\E_{k+1}$, for each $k\in\{0,\dots,n-1\}$ when $\E_k$ and $\E_{k+1}$ two distinct ellipses;
		
		\item[(D)]
		$s$ identical copies $\D_{k}^1$, \dots, $\D_{k}^s$ of the disk with boundary $\E_k$, for each $k\in\{1,\dots,n\}$ such that $i_k=1$ and: 
\begin{itemize}
	\item $\E_k=\dots=\E_{k+s-1}$, $\E_{k-1}$ is within $\E_k$, which is within $\E_{k+s}$; or
	\item $\E_k=\dots=\E_{k+s-1}$, $\E_{k+s}$ is within $\E_k$, which is within $\E_{k-1}$;
\end{itemize}
		\item[(D')] 
$s+1$ identical copies $\D_{k}^1$, \dots, $\D_{k}^{s+1}$ of the disk with boundary $\E_k$, for each $k\in\{1,\dots,n\}$ such that $i_k=1$, $\E_k=\dots=\E_{k+s-1}$, $\E_{k}$ is within $\E_{k-1}$ and $\E_{k+s}$;

\item[(D'')] $s-1$ identical copies $\D_{k}^1$, \dots, $\D_{k}^{s-1}$ of the disk with boundary $\E_k$, for each $k\in\{1,\dots,n\}$ such that $i_k=1$, $\E_k=\dots=\E_{k+s-1}$, $\E_{k-1}$ and $\E_{k+1}$ are within $\E_{k}$. \end{itemize}

The gluing permutations for the book are:
	\begin{itemize}
\item $\sigma_k=(\A_{k-1}\D_k^1\dots\D_k^s\A_k)$ for each $k$ and $s$ satisfying (D);
\item $\sigma_k=(\A_{k-1}\D_k^1\dots\D_k^{s+1}\A_k)$ for each $k$ and $s$ satisfying (D');
\item $\sigma_k=(\A_{k-1}\D_k^1\dots\D_k^{s-1}\A_k)$ for each $k$ and $s$ satisfying (D'');

			\item $\sigma_k=(\A_{k-1}\A_k)$, otherwise.
	\end{itemize}
\end{theorem}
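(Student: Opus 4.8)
The plan is to reduce Theorem \ref{th:algor2} to Theorem \ref{th:algor} by a grouping-and-collapsing argument. The key observation is that the only new phenomenon compared to Theorem \ref{th:algor} is the presence of maximal runs of repeated consecutive ellipses $\E_k=\E_{k+1}=\dots=\E_{k+s-1}$. Since by the hypothesis preceding the theorem any such repetition forces the corresponding signs $i_k,\dots,i_{k+s-1}$ to be $+1$ (all reflections off a repeated ellipse are from inside), the dynamics at such a run consists of $s$ consecutive reflections off one and the same ellipse $\E_k$, all from inside. The strategy is to show that each maximal run is modelled by the appropriate number of identical disk-leaves glued in a chain, and that collapsing the run to a single ellipse recovers exactly the situation handled by Theorem \ref{th:algor}.

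First I would set up the tracing argument exactly as in the proof of Theorem \ref{th:algor}: assume the caustic is either an ellipse contained within all of $\E_1,\dots,\E_n$ or a hyperbola, assume the particle is on $\A_{k-1}$ moving towards $\E_k$, and verify that it emerges on $\A_{k}$ moving towards $\E_{k+1}$ after performing the prescribed reflections. For indices $k$ lying outside any repeated run (i.e.\ $\E_{k-1}\neq\E_k\neq\E_{k+1}$), the analysis is verbatim that of Theorem \ref{th:algor}, so I would simply invoke it. The genuinely new case is a maximal run $\E_k=\dots=\E_{k+s-1}$, where I must check that the chain of disk-leaves realises exactly $s$ internal reflections off that ellipse and returns the particle to the outgoing annulus $\A_{k+s-1}=\A_k$ (in the run's indexing) heading towards the next distinct ellipse.

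The heart of the matter is the counting of disk copies in the three cases (D), (D'), (D''), which depends on how the run ellipse sits relative to its two distinct neighbours $\E_{k-1}$ and $\E_{k+s}$. Each internal reflection off $\E_k$ from inside is realised by a rule (R2) transition between two disks on the same side, while each crossing of $\E_k$ by a straight segment (rule (R3)) occurs precisely when the particle passes between an annulus and a disk lying on opposite sides. In case (D), where exactly one neighbour is outside $\E_k$, the entry and exit each cost one (R3)-crossing, so the chain $\A_{k-1}\D_k^1\dots\D_k^s\A_k$ with $s$ disks yields exactly $s$ internal reflections. In case (D''), both neighbours lie within $\E_k$, so both entry and exit are (R2)-reflections off $\E_k$ and the two boundary reflections of the run are absorbed into the annulus transitions; this removes two of the needed disk-mediated reflections, explaining the count $s-1$. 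In case (D'), both neighbours contain $\E_k$, so the particle must cross into the disk region on entry and cross back out on exit, and additionally reflect, producing one extra disk and the count $s+1$. The main obstacle will be verifying these three counts uniformly and carefully, in particular confirming that in each case the total number of reflections off $\E_k$ is exactly $s$ and that the permutation $\sigma_k$ threads the annuli and disks in the correct cyclic order so that rules (R2) and (R3) fire in the intended alternating pattern.

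Having checked that every maximal run correctly produces $s$ internal reflections and hands the particle off to the next leaf, I would close the argument by induction on the successive distinct ellipses, exactly as in Theorem \ref{th:algor}: starting on $\A_0$ heading towards $\E_1$, the particle visits the ellipses in the prescribed cyclic order, reflecting from inside whenever $i_k=1$ and from outside whenever $i_k=-1$, with each repetition faithfully reproduced by the disk-chains. This shows the traced trajectory is precisely the billiard ordered game $(\E_1,\dots,\E_n)$ with signature $(i_1,\dots,i_n)$, completing the proof.
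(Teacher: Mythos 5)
Your overall strategy --- trace the particle through each maximal run of repeated ellipses, and reduce all indices outside runs to Theorem \ref{th:algor} --- is exactly the route the paper takes: its proof of Theorem \ref{th:algor2} is a one-sentence reference to the proof of Theorem \ref{th:algor}, adding disk copies for repeated consecutive ellipses. Your treatments of cases (D') and (D'') are correct. However, your count in case (D) contains a genuine error, and it sits precisely in the step you yourself call the heart of the matter.

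In case (D), exactly one of the two distinct neighbours of the run lies inside $\E_k$ and the other lies outside. Consequently exactly \emph{one} of the two boundary transitions of the chain $\A_{k-1}\D_k^1\dots\D_k^s\A_k$ is an (R3)-crossing --- the one involving the neighbour annulus lying outside $\E_k$ --- while the other boundary transition joins two leaves on the same side of $\E_k$ and is therefore an (R2)-reflection. Your claim that ``the entry and exit each cost one (R3)-crossing'' is false here, and it is inconsistent with your own conclusion: a chain with $s$ disks has $s+1$ transitions, so if both entry and exit were crossings, only the $s-1$ disk-to-disk transitions would be reflections, one short of the $s$ reflections the game requires; under your accounting, the theorem's prescription of $s$ disks in case (D) would appear to be wrong. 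The correct tally is $1+(s-1)=s$: one reflection at the boundary transition adjacent to the inside neighbour, plus $s-1$ disk-to-disk reflections. The root of the slip is your general principle that every internal reflection is ``a rule (R2) transition between two disks'': (R2)-reflections also occur between an annulus and a disk when they lie on the same side of $\E_k$ (the entry or exit in case (D), and both entry and exit in case (D'')), and that is exactly what your case (D) bookkeeping omits. Once this is corrected, the induction over successive runs closes as you describe, and the argument coincides with the paper's.
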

\begin{proof}
Similar to the proof of Theorem \ref{th:algor}, with adding additional copies of the disk whenever there are multiple identical consecutive ellipses in the game.
\end{proof}

Let us notice that not all trajectories on the books constructed in Theorem \ref{th:algor} and \ref{th:algor2} are of the initial billiard ordered game.
The behavior of the particle will depend on the caustic, the leaf and the direction of the motion at the initial point.
We notice also that, in general, the motion on a billiard book is not time reversible.
The reason for that is that the gluing permutations are not involutions, if they are not all just transpositions or compositions of disjoint transpositions.

\begin{proposition}
For the billiard books constructed in Theorem \ref{th:algor} and \ref{th:algor2}, a trajectory 
corresponds to the billiard ordered game $(\E_1,\dots,\E_n)$ with signature $(i_1,\dots,i_n)$ if it has the following properties:
\begin{itemize}
\item
it contains a point within the leaf $\A_0$;
\item the direction of motion at that point is towards $\E_1$;
\item the caustic an ellipse contained within all ellipses $\E_1$, \dots, $\E_n$ or a hyperbola.
\end{itemize}	
\end{proposition}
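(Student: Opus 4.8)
The plan is to deduce this proposition directly from the inductive mechanism already established in the proofs of Theorems \ref{th:algor} and \ref{th:algor2}. The three hypotheses are designed to be exactly the conditions under which that induction launches and perpetuates itself, so the main work is to verify that each hypothesis is preserved by one step of the dynamics.

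First I would record the two invariance facts that drive the argument. The caustic is preserved under every transition rule: rule (R2) is an ordinary billiard reflection off a member of the confocal family \eqref{eq:ConFam}, which keeps the caustic fixed by the standard property of confocal billiards, while rule (R3) continues the particle along the same straight line and hence trivially along the same caustic. Therefore, if the caustic is an ellipse contained within all of $\E_1,\dots,\E_n$ or a hyperbola at the starting point, it remains of this type for the entire trajectory. This is precisely the hypothesis under which the case analysis in the proof of Theorem \ref{th:algor} was carried out, and it is what guarantees that a segment travelling inside an annulus $\A_{k-1}$ toward its inner boundary actually crosses $\E_k$ rather than being confined by an intermediate caustic (which is what produces the shorter sub-games seen in the examples).

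Next I would invoke the key step from the theorem proofs as the inductive step: under the caustic hypothesis, if the particle lies on $\A_{k-1}$ and moves toward $\E_k$, then---regardless of which of the leaf and gluing cases (A), (D1)/(D2) (or (D)/(D')/(D'') in Theorem \ref{th:algor2}) the index $k$ falls under---it undergoes exactly one reflection off $\E_k$, from inside if $i_k=1$ and from outside if $i_k=-1$, after which it is located on $\A_k$ and moves toward $\E_{k+1}$. This is exactly the conclusion assembled case-by-case in the proof of Theorem \ref{th:algor}, and it shows that the pair of conditions ``on $\A_{k-1}$, moving toward $\E_k$'' reproduces itself with $k$ replaced by $k+1$.

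Finally I would run the induction. The initial hypotheses place the particle on $\A_0$ moving toward $\E_1$, which is the case $k=1$; applying the inductive step repeatedly (with indices read modulo $n$, using $\E_{n+1}=\E_1$ and $\E_0=\E_n$) produces a sequence of reflection points $A_1,A_2,\dots$ with $A_k\in\E_k$ and with the reflection at $A_k$ being from inside exactly when $i_k=1$. This is precisely the defining property of the billiard ordered game $(\E_1,\dots,\E_n)$ with signature $(i_1,\dots,i_n)$. The only point demanding care---and the one I regard as the main obstacle---is checking that the two self-preserving conditions (caustic type and ``moving toward the next ellipse'') are genuinely maintained at every transition, since it is their simultaneous persistence, rather than either one alone, that forces the single correctly-signed reflection off $\E_k$ at each stage and rules out the alternative, shorter games that arise for other caustics or initial directions.
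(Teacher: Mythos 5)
Your proof is correct and takes essentially the same approach as the paper: the paper states this proposition without a separate proof precisely because it is the conclusion assembled in the inductive case analysis in the proofs of Theorems \ref{th:algor} and \ref{th:algor2} (particle on $\A_{k-1}$ moving towards $\E_k$ reflects once off $\E_k$ with the sign $i_k$ and continues on $\A_k$ towards $\E_{k+1}$, then induct starting from $\A_0$), which is exactly the step you invoke and iterate. Your explicit observation that rules (R2) and (R3) preserve the caustic is a sound, if minor, addition that the paper leaves implicit as a standard property of confocal billiards.
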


\begin{proposition}
	Suppose that a billiard game $(\E_{1}, \E_{2}, \ldots, \E_{n})$ with signature $(i_1,i_2,\ldots, i_n)$ is realised on the billiard book with the leaves $\mathcal{L}_j$ and gluing permutations $\sigma_s$.
	The inverse billiard game $(\E_1, \E_{n}, \ldots, \E_{2})$ with signature $(i_1, i_n, \ldots, i_2)$ will then be realised on the book with the same leaves $\mathcal{L}_j$ and the inverse gluing permutations $\sigma_s^{-1}$.
	The billiard particles in both books trace the same trajectories, but in reverse. 
\end{proposition}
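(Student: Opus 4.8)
The plan is to exploit the fact that the only source of irreversibility in a billiard book is the choice of target leaf at each boundary crossing, while the underlying planar motion and the reflection/transmission operations are themselves time-symmetric. I would therefore fix a trajectory $T$ on the book with leaves $\L_j$ and gluings $\sigma_s$, and consider its pointwise reversal $\bar T$: the same sequence of points on the same leaves, but traversed in the opposite order and with the velocity negated at every instant. The goal is to show that $\bar T$ is an admissible trajectory of the book with the same leaves and the inverted gluings $\sigma_s^{-1}$ (which are automatically again permutations of the leaves incident to each $\E_k$), and that it realises the game $(\E_1,\E_n,\dots,\E_2)$ with signature $(i_1,i_n,\dots,i_2)$.

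First I would treat the motion inside a single leaf: since the particle moves along straight segments, reversing the order of the endpoints and negating the velocity produces the same segment traversed backwards, which is again free motion. Thus the segment interiors present no difficulty, and the whole question reduces to the behaviour at boundary crossings. At a crossing of $\E_k$, suppose that in forward time the particle arrives at a point $X\in\E_k$ on a leaf $\L$ with incoming velocity $v$ and leaves on $\sigma_k(\L)$ with outgoing velocity $v'$. The two key observations are: (i) the rule that applies --- (R2) if $\L$ and $\sigma_k(\L)$ lie on the same side of $\E_k$, (R3) if on opposite sides --- depends only on the unordered pair of leaves, hence is unchanged when their roles are interchanged; and (ii) both operations are involutive in the plane: reflection off $\E_k$ carries $-v'$ to $-v$ (reflection being its own inverse, with the reflection of $v'$ equal to $v$), and straight transmission carries $-v'=-v$ to $-v$. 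Consequently, applying the same rule to the reversed state $(X,-v')$ on the leaf $\sigma_k(\L)$ produces outgoing velocity $-v$, provided the target leaf is taken to be $\L$. Since $\sigma_k^{-1}(\sigma_k(\L))=\L$, this is exactly the transition prescribed by the inverted gluing $\sigma_k^{-1}$; for (R1) the permutation $\sigma_k$ fixes $\L$, so does $\sigma_k^{-1}$, and plain reflection is its own reverse. This establishes that every boundary crossing of $T$ becomes a legitimate crossing of $\bar T$ in the inverted book.

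Finally I would identify the game realised by $\bar T$. Reversing the order of the vertices $\dots,A_{k+1},A_k,A_{k-1},\dots$ traverses the cyclic sequence of ellipses $\E_1,\E_2,\dots,\E_n$ backwards, i.e.\ as $\E_1,\E_n,\dots,\E_2$, while the inside/outside character of each reflection --- which determines the sign $i_s$ attached to $\E_s$ --- is manifestly unaffected by negating the velocity. Hence the signs reorder to $(i_1,i_n,\dots,i_2)$, and $\bar T$ is precisely a trajectory of the inverse game; by construction it consists of the same points as $T$, traced in reverse.

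I expect the \textbf{main obstacle} to be the clean verification of point (i): one must check that the dichotomy deciding between (R2) and (R3) is genuinely symmetric under interchanging $\L$ and $\sigma_k(\L)$, and that in the degenerate configurations (several identical consecutive disks, or an ellipse bounding only one leaf, cf.\ Remark~\ref{rem:minusone} and Section~\ref{sec:books}) the inverted permutation still selects an admissible neighbouring leaf on the correct side. Everything else reduces to the routine reversibility of straight-line motion and of planar reflection.
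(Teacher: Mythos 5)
Your proposal is correct, and in fact it supplies something the paper itself omits: the proposition is stated there without any proof, evidently regarded as a consequence of the constructions in Theorems \ref{th:algor} and \ref{th:algor2}. Your time-reversal argument is the natural justification and all its steps check out: (i) the dichotomy between (R2) and (R3) depends only on which sides of $\E_k$ the unordered pair $\{\L,\sigma_k(\L)\}$ lies, so the reversed transition from $\sigma_k(\L)$ to $\sigma_k^{-1}(\sigma_k(\L))=\L$ is governed by the same rule; (ii) planar reflection across the tangent line is an involution, so it carries $-v'$ to $-v$, and transmission trivially does the same; (iii) reversing the order of the reflection points turns the cyclic sequence $(\E_1,\E_2,\dots,\E_n)$ into $(\E_1,\E_n,\dots,\E_2)$ while leaving each inside/outside sign $i_s$ attached to its ellipse, giving the signature $(i_1,i_n,\dots,i_2)$. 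Your argument also has the virtue of being completely general (it applies to an arbitrary book, not only those produced by the theorems), and it dovetails with the paper's own remark that motion on a book fails to be time reversible precisely because the gluing permutations need not be involutions --- which is exactly why passing to $\sigma_s^{-1}$ is what restores the reversed dynamics. The ``main obstacle'' you flag is handled by your observation (i) together with the fact that when $\E_k$ bounds a single leaf, or when $\sigma_k$ fixes $\L$, both $\sigma_k$ and $\sigma_k^{-1}$ act identically there, so rule (R1) (respectively (R2) with $\sigma_k(\L)=\L$) is self-reversing; the multiple-identical-disk configurations of Theorem \ref{th:algor2} pose no extra difficulty since the inverse of the cycle $(\A_{k-1}\D_k^1\dots\D_k^s\A_k)$ traverses the same disks in the opposite order.
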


\section{Topology of billiard ordered games}\label{sec:topology}

Here we give a topological description of the phase space of billiards within books.
In Section \ref{sec:foliation}, we summarize the main properties of the foliation of that phase space, while Section \ref{sec:fomenko} contains details on the examples of billiard books presented in Sections \ref{sec:billiards} and \ref{sec:Examples}.

\subsection{Foliation of the phase space}
\label{sec:foliation}

In all billiard systems where the billiard particle moves without constraints within the domain and obeys billiard reflection law off the boundary, all isoenergy manifolds in the phase space are isomorphic to each other, since trajectories do not depend on the speed of the billiard particle.
Thus, we will consider the isoenergy subspace corresponding to the unit speed of the billiard particle.

If $\mathcal{B}$ is the billiard book, consisting of leaves $\L_j$ with the gluing permutations $\sigma_i$, then the isoenergy manifold is the Cartesian product $\mathcal{B}\times\mathbf{S}^1/\sim$ of the book with the unit circle, with the identification $\sim$, which is induced by the gluing permutations.

Due to the existence of caustics, the phase space of billiards within books defined in Section \ref{sec:books} is foliated into compact level sets.
Each level set is the union of finitely many connected components, which are in general $2$-dimensional tori.
Such components are \emph{non-degenerate}.
The degenerate level sets can appear in one of the following cases:
\begin{itemize}
\item the caustic is degenerate;
\item the caustic coincides with the boundary of some of the leaves of the book $\mathcal{B}$.
\end{itemize}
Note that, in the latter case, the behaviour of the billiard particle is not well-defined after touching the caustic.
Depending on the gluing permutation along that boundary ellipse, sometimes the motion of the billiard particle can be extended, so that the flow in the neighbourhood of the touching point will remain continuous, but sometimes that is not possible.
We note that the latter case can happen only when at least one of the leaves glued along that boundary ellipse is annulus outside the ellipse, i.e.~that boundary is non-convex for that leaf.
In the case when we can extend the flow continuously, that level set will be regular, otherwise its neighbourhood in the isoenergy manifold will have the structure of a Fomenko atom, see \cite{VF2017}.
This is why the Fomenko graphs can be used for the description of the foliations of the isoenergy manifolds of the billiards within books, despite the irregularities of such billiards.
For more details, see \cites{VF2019, FV2019, FV2019b, Ved2019} and references therein.

Recall that the Fomenko graphs classify such foliations up to homeomorphisms of their bases, which are Reeb graphs with Fomenko atoms at the vertices.
Fomenko graphs with numerical marks, i.e.~Fomenko-Zieschang invariants, provide classification up to fiber-wise homeomorphism.
Note also that non-homeomorphic $3$-manifolds, such as $S^3$, $\mathbf{RP}^3$, $S^1\times S^2$, etc., can be equipped with foliations giving the same Fomenko graph $\mathbf{A}-\mathbf{A}$ and the numerical marks will show the difference between them.
At the same time, even most simple manifolds, such as $S^3$, can be foliated in different manners and give different Fomenko graphs.

\subsection{Topological description of isoenergy manifolds}\label{sec:fomenko}

In the following, we provide unmarked Fomenko graphs for each of the examples provided in Sections \ref{sec:books} and \ref{sec:Examples}. 
The horizontal axis below each graph corresponds to the parameter $\lambda$ of the caustic.
The parameters of the ellipses $\E_1$, $\E_2$, \dots are $\beta_1$, $\beta_2$, \dots, according to the notation from the beginning of Section \ref{sec:construction}.
We will also denote by $A_j$, $A_j'$ and $B_j$, $B_j'$ the intersection points of the ellipse $\E_j$ with the coordinate axes: $A_j(\sqrt{a-\beta_j},0)$, $A_j'(-\sqrt{a-\beta_j},0)$,
$B_j(0,\sqrt{b-\beta_j})$,  
$B_j'(0,-\sqrt{b-\beta_j})$.

\begin{proposition}\label{prop:fomenko-ex1}
The isoenergy manifold of the billiard within the book from Example \ref{ex:1} is given by the Fomenko graph in Figure \ref{fig:FomenkoEx1}.
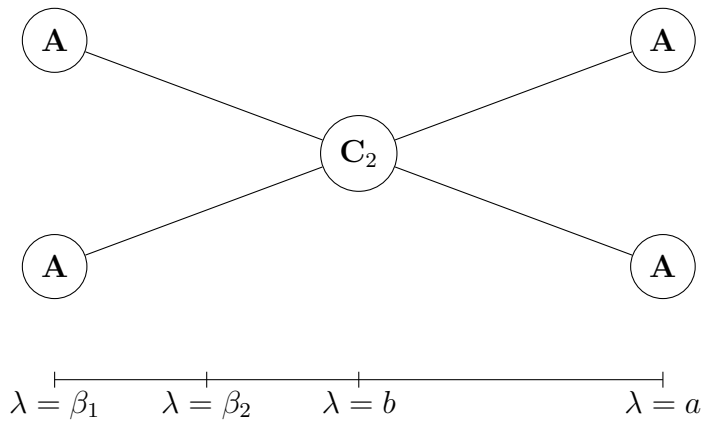
\begin{figure}[htp]
	\centering
	\begin{tikzpicture}[>=stealth',el/.style = {inner sep=3pt, align=left, sloped},em/.style = {inner sep=3pt, pos=0.75, sloped}]
	\tikzset{vertex/.style = {shape=circle,draw,minimum size=1.5em}}
	\node[vertex] (aa) at  (0,1.5) {$\mathbf{A}$};
	\node[vertex] (bb) at  (4,0) {$\mathbf{C}_2$};
	\node[vertex] (ee) at  (8,1.5) {$\mathbf{A}$};
	\node[vertex] (ff) at  (0,-1.5) {$\mathbf{A}$};
	\node[vertex] (jj) at  (8,-1.5) {$\mathbf{A}$};
	\path[-] 
	(bb) edge (aa) 
	(bb) edge (ee) 
	(bb) edge (ff) 
	(bb) edge (jj); 
	\draw (0,-3) -- (8,-3);
	\foreach \x in {0,2,4,8}
	\draw[shift={(\x,-3)},color=black] (0pt,3pt) -- (0pt,-3pt);
	\node[below] at (0,-3) { $\lambda=\beta_1$};
	\node[below] at (2,-3) { $\lambda=\beta_2$};
	\node[below] at (4,-3) { $\lambda=b$};
	\node[below] at (8,-3) { $\lambda=a$};
	\end{tikzpicture}
	\caption{Proposition \ref{prop:fomenko-ex1}: Fomenko graph for the billiard within the book from Example \ref{ex:1}.}
	\label{fig:FomenkoEx1}
\end{figure}
\end{proposition}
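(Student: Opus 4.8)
The plan is to analyze the billiard dynamics on the book from Example \ref{ex:1} as a function of the caustic parameter $\lambda$, identifying for each value of $\lambda$ the connected components of the corresponding level set and how they assemble into the isoenergy manifold $\mathcal{B}\times\Sn{1}/\sim$. The book consists of the annulus $\L_1$ (between $\E_1$ and $\E_2$) and two disks $\L_2$, $\L_3$ bounded by $\E_2$, glued by $\sigma_2=(123)$ along $\E_2$, with $\E_1$ the free outer boundary governed by (R1). I would partition the $\lambda$-axis at the critical values $\lambda=\beta_1$, $\lambda=\beta_2$, $\lambda=b$, $\lambda=a$, since these are precisely where the caustic degenerates or coincides with a boundary ellipse, producing the vertices of the Fomenko graph.

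First I would treat the generic regions. For $\lambda\in(\beta_2,b)$ the caustic is an ellipse contained in $\E_2$ (or, via the hyperbola range $\lambda\in(b,a)$, a confocal hyperbola); by the analysis in Example \ref{ex:1} the particle visits all three leaves and traces a billiard ordered game of length two. Here I would argue each connected level component is a nondegenerate Liouville torus, giving the regular edges of the graph. For $\lambda\in(\beta_1,\beta_2)$ the caustic is an ellipse containing $\E_2$ but inside $\E_1$, so the particle reflects only off $\E_1$ and moves within $\L_1$ as an ordinary annular billiard; these contribute the edges meeting the outer $\mathbf{A}$-atoms, corresponding to the two rotational senses of a closed caustic in an annulus. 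The key organizing step is to count how many torus components exist on each side of $\lambda=\beta_2$ and verify they merge through a single saddle: the atom $\mathbf{C}_2$ (a saddle with two incoming and two outgoing tori, i.e.\ the "figure-eight-doubled" atom) is the expected vertex because passing $\lambda=\beta_2$ is where the caustic crosses the gluing ellipse $\E_2$ and the three-fold permutation $\sigma_2=(123)$ reorganizes the flow.

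Next I would pin down the endpoints. At $\lambda=a$ and at $\lambda=\beta_1$ the caustic degenerates (to the major axis segment, and to a boundary limit respectively), and in each case the Liouville torus pinches to a circle, yielding an $\mathbf{A}$-atom; the four $\mathbf{A}$ vertices in Figure \ref{fig:FomenkoEx1} correspond to these limiting circles, two attached at $\lambda=\beta_1$ (the inner annulus boundary, split by the two orientations) and two at $\lambda=a$ (the degenerate hyperbola limit). I would justify that $\lambda=b$ does \emph{not} produce a separate atom on this graph—despite being a degeneracy of the confocal family—by checking that the flow extends continuously there for this particular book, so it appears merely as a marked point on the $\lambda$-axis rather than a vertex. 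The whole picture is then the claim that the saddle $\mathbf{C}_2$ sits at $\lambda=b$ in the figure, connected by four edges to the four $\mathbf{A}$-atoms.

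The main obstacle I anticipate is correctly identifying the saddle atom as $\mathbf{C}_2$ rather than a simpler $\mathbf{B}$ or a pair of separate bifurcations. This requires carefully tracking, through the non-involutive permutation $\sigma_2=(123)$, how the separatrices of the critical level at the transition join up across the three glued leaves; because the book is not a surface near $\E_2$ and the dynamics is not time-reversible, the standard surface-billiard atom computations do not apply verbatim, and I would instead verify the atom by directly describing the critical level set and the number of tori limiting onto it from each side, matching the local model of $\mathbf{C}_2$. A secondary subtlety is confirming the correct multiplicity (two versus one) of the $\mathbf{A}$-atoms at each endpoint, which hinges on whether the closed-caustic trajectories come in two orientation classes that remain distinct under the gluing, and I would settle this by counting connected components of the level sets just inside each endpoint interval.
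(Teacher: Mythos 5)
Your proposal contains a genuine error: you have placed the saddle at the wrong critical value, and your justification at each of the two candidate values is the reverse of what actually happens. You claim the $\mathbf{C}_2$ atom arises at $\lambda=\beta_2$, ``where the caustic crosses the gluing ellipse $\E_2$'' and the permutation $\sigma_2=(123)$ ``reorganizes the flow,'' while at $\lambda=b$ you claim the flow ``extends continuously\dots so it appears merely as a marked point.'' The correct analysis is exactly opposite. At $\lambda=\beta_2$ the flow \emph{does} extend continuously: for motion parallel to a tangent line of $\E_2$ near the tangency point, the limit from the side on which the particle never reaches $\E_2$ (staying on $\L_1$) and the limit from the side on which it crosses into $\L_2$, reflects off $\E_2$ into $\L_3$, and returns to $\L_1$, agree --- both limits amount to continuing straight through the tangency. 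Hence the level set $\lambda=\beta_2$ consists of two regular tori and there is no vertex there. (This continuity is special to the book of Example \ref{ex:1}; in Example \ref{ex:1b} it fails and one gets $\mathbf{B}$ atoms at $\beta_2$, see Proposition \ref{prop:fomenko-ex1b} --- so the distinction cannot be waved through, it must be checked.) Your own component count already undermines your placement: there are exactly two tori on each side of $\beta_2$, and they deform continuously into one another, so nothing merges or splits at that value.

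The saddle sits at $\lambda=b$, and not because of any gluing subtlety but because the caustic degenerates to the focal line. That level set contains two closed orbits along the major axis --- with reflection points $A_1,A_2'$ and $A_1',A_2$ --- together with four separatrices consisting of trajectories through the foci, each approaching one closed orbit as $t\to+\infty$ and the other as $t\to-\infty$; this is precisely the local model of $\mathbf{C}_2$, with two incident edges on each side. Your criterion ``the flow extends continuously at $\lambda=b$'' conflates continuity of the flow with regularity of the level set: the level at $b$ is singular because it contains hyperbolic closed orbits, irrespective of continuity. Your write-up is also internally inconsistent, first arguing that $\lambda=b$ produces no atom and then concluding that ``the saddle $\mathbf{C}_2$ sits at $\lambda=b$ in the figure.'' The endpoint analysis is fine: two $\mathbf{A}$ atoms at $\beta_1$ from the limit flow along $\E_1$ in each direction, and two $\mathbf{A}$ atoms at $a$ from the periodic orbits with reflection points $B_1,B_2'$ and $B_1',B_2$.
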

\begin{proof}
The leaves on the level $\lambda=\beta_1$ are degenerate: they correspond to the limit flow along ellipse $\E_1$.
Two singular circles, i.e.~type $\mathbf{A}$ Fomenko atoms correspond to each direction of that flow. 

For each parameter $\lambda\in(\beta_1,\beta_2)\cup(\beta_2,b)$, there are two tori, one corresponding to each direction a trajectory can wind around the ellipse. 

Consider the level set $\lambda=\beta_2$ and notice that the billiard flow can be continuously extended to that level set.
The reason for that is as follows.
Consider the billiard motion parallel to a given tangent line to $\E_2$ and close to the touching point to $\E_2$.
On one side of that tangent line, the particle does not reach $\E_2$, and continues its motion only on leaf $\E_1$, see Figure \ref{fig:Ex1}.
At the other side of the tangent line, the particle crosses $\E_2$ to enter leaf $\L_2$, then bounces off $\E_2$, which occurs when the particle crosses from $\L_2$ to $\L_3$, and finally return again to $\L_1$.
The continuous extension for the motion along the tangent line is simply to carry on straightforward also after touching $\E_2$. This is why the level set for $\lambda=\beta_2$ will also consist of two tori.

For $\lambda=b$, we have a singular level set corresponding to the $\mathbf{C}_2$ Fomenko atom.
On that level set, there are two closed orbits and four separatrices.
One of the closed orbits has reflection points $A_1$ and $A_2'$ and the other one $A_1'$ and $A_2$.
All trajectories on each separatrix are asymptotically approaching one of those closed orbits as the time approaches $+\infty$ and the other one for the time approaching $-\infty$.
On two separatrices, the particle passes through one of the foci moving upwards and through the other one moving downwards, while on the remaining two separatrices that is reversed.

Between $\lambda = b$ and $\lambda = a$, the caustic is a hyperbola and there are again two tori: one torus corresponds to trajectories whose reflections from the inside of $\E_{1}$ are in the upper half-plane and the reflections from $\E_2$ are in the lower half-plane, while the other torus corresponds to trajectories which reflect from $\E_{1}$ in the lower half-plane and from $\E_{2}$ in the upper half-plane.

There are two singular type $\mathbf{A}$ leaves for $\lambda=a$.
One of them is the periodic orbit with reflection points $B_1$ and $B_2'$, the other one the orbit with reflections at $B_1'$ and $B_2$.
\end{proof}

\begin{remark}
The Fomenko graph given in the previous example is the same as that for the billiard in an elliptic annulus, where the inner boundary is confocal to the outer boundary (see Prop. 2.2 of \cite{DR2010}). 
\end{remark}

\begin{proposition}\label{prop:fomenko-ex1b}
	The isoenergy manifold of the billiard within the book from Example \ref{ex:1b} is given by the Fomenko graph in Figure \ref{fig:FomenkoEx1b}.
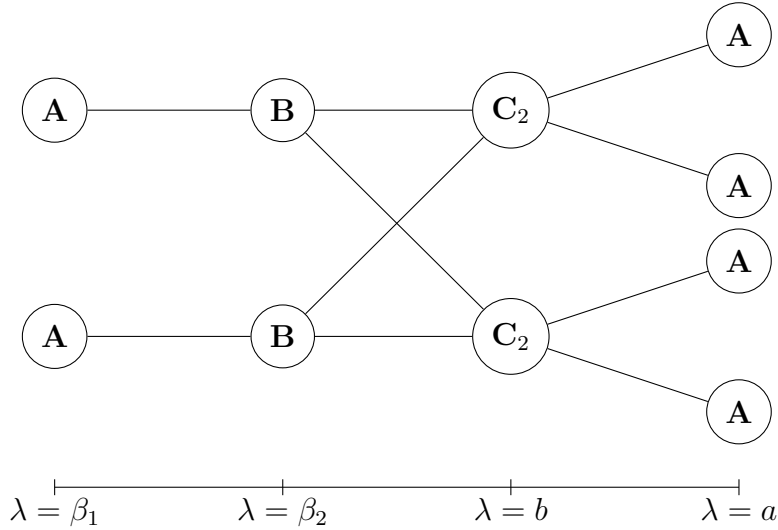
\begin{figure}[htp]
	\centering
	\begin{tikzpicture}[>=stealth',el/.style = {inner sep=3pt, align=left, sloped},em/.style = {inner sep=3pt, pos=0.75, sloped}]
	\tikzset{vertex/.style = {shape=circle,draw,minimum size=1.5em}}
	\node[vertex] (a) at  (0,1.5) {$\mathbf{A}$};
	\node[vertex] (b) at  (0,-1.5) {$\mathbf{A}$};
	\node[vertex] (c) at  (3,1.5) {$\mathbf{B}$};
	\node[vertex] (d) at  (3,-1.5) {$\mathbf{B}$};
	\node[vertex] (e) at  (6,1.5) {$\mathbf{C}_2$};
	\node[vertex] (f) at  (6,-1.5) {$\mathbf{C}_2$};
	\node[vertex] (g) at  (9,2.5) {$\mathbf{A}$};
	\node[vertex] (h) at  (9,0.5) {$\mathbf{A}$};
	\node[vertex] (i) at  (9,-0.5) {$\mathbf{A}$};
	\node[vertex] (j) at  (9,-2.5) {$\mathbf{A}$};
	\path[-] 
	(c) edge (a) 
	(d) edge (b) 
	(e) edge (c)
	(f) edge (c)
	(e) edge (d)
	(f) edge (d)
	(e) edge (g)
	(e) edge (h)
	(f) edge (i)
	(f) edge (j); %
	\draw (0,-3.5) -- (9,-3.5);
	\foreach \x in {0,3,6,9}
	\draw[shift={(\x,-3.5)},color=black] (0pt,3pt) -- (0pt,-3pt);
	\node[below] at (0,-3.5) { $\lambda=\beta_1$};
	\node[below] at (3,-3.5) { $\lambda=\beta_2$};
	\node[below] at (6,-3.5) { $\lambda=b$};
	\node[below] at (9,-3.5) { $\lambda=a$};
	\end{tikzpicture}
	\caption{Proposition \ref{prop:fomenko-ex1b}: Fomenko graph for the billiard book from Example \ref{ex:1b}.}
	\label{fig:FomenkoEx1b}
\end{figure}
\end{proposition}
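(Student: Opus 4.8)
The plan is to follow the strategy of the proof of Proposition \ref{prop:fomenko-ex1}: view the isoenergy manifold $\mathcal{B}\times\mathbf{S}^1/\sim$ as foliated by the value of the caustic parameter $\lambda$, count the Liouville tori on each of the regular intervals $(\beta_1,\beta_2)$, $(\beta_2,b)$, $(b,a)$, and then classify the singular level sets over the critical values $\lambda\in\{\beta_1,\beta_2,b,a\}$. The new ingredient that forces the richer graph of Figure \ref{fig:FomenkoEx1b} is the dichotomy recorded in Example \ref{ex:1b}: as soon as the caustic is an ellipse inside $\E_2$ or a hyperbola, every trajectory belongs to exactly one of two families, according to whether the particle on $\L_1$ moves towards $\E_2$ (the ordered game $(\E_1,\E_2)$ with signature $(1,1)$, sweeping through all four leaves) or towards $\E_1$ (the billiard in the annulus between $\E_1$ and $\E_2$, confined to $\L_1$ and $\L_4$). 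I would treat these two families separately and import their foliations: over $\lambda\ge\beta_2$ the game family reproduces the part of the foliation of Example \ref{ex:1} described in Proposition \ref{prop:fomenko-ex1}, while the annulus family reproduces the foliation of the elliptic annulus billiard recalled in the Remark after Proposition \ref{prop:fomenko-ex1} (cf.\ \cite{DR2010}).

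Carrying this out, on $(\beta_2,b)$ each family contributes two tori (one per circulation sense), for a total of four, and likewise on $(b,a)$; on $(\beta_1,\beta_2)$, where the caustic separates $\E_1$ from $\E_2$, only the motion reflecting off $\E_1$ on the leaves $\L_1,\L_4$ survives, which is just the billiard inside $\E_1$ and contributes two tori. The atoms at three of the four critical levels are then inherited: at $\lambda=\beta_1$ the two grazing orbits along $\E_1$ give two $\mathbf{A}$ atoms; at $\lambda=b$ each family degenerates exactly as in Proposition \ref{prop:fomenko-ex1}, giving two $\mathbf{C}_2$ atoms (the game's and the annulus's); and at $\lambda=a$ the two families together produce four periodic orbits through the $y$-axis vertices (the two game orbits $B_1B_2'$, $B_1'B_2$ and the two vertical annulus orbits), hence four $\mathbf{A}$ atoms. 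On $(b,a)$ each $\mathbf{C}_2$ joins the two $\mathbf{A}$ atoms of the same family, reproducing the right-hand part of Figure \ref{fig:FomenkoEx1b}.

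The main obstacle, and the only genuinely new computation, is the critical level $\lambda=\beta_2$, where the caustic coincides with $\E_2$ and the flow fails to extend continuously. Here I would contrast the gluing $\sigma_2=(1234)$ with the gluing $\sigma_2=(123)$ of Example \ref{ex:1}: a trajectory on $\L_1$ that dips infinitesimally inside $\E_2$ traverses the excursion $\L_1\to\L_2\to\L_3\to\sigma_2(\L_3)$, which now lands on $\L_4$ rather than returning to $\L_1$, whereas a trajectory that just misses $\E_2$ stays on $\L_1$. Thus across the grazing tangency the leaf jumps from $\L_1$ to $\L_4$, the flow is discontinuous, and, in line with the criterion recorded in Section \ref{sec:foliation} (since $\E_2$ is non-convex for the annular leaves $\L_1,\L_4$), a genuine Fomenko atom appears. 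I would then show that each of the two grazing orbits along $\E_2$, one per circulation sense, is the critical circle of a figure-eight saddle whose inner side sends the trajectory into the game family and whose outer side sends it into the annulus family; hence the single torus coming up from $(\beta_1,\beta_2)$ splits into one game torus and one annulus torus. This is precisely a $\mathbf{B}$ atom, and there are two of them.

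Finally I would assemble the incidences. Each of the two tori of $(\beta_1,\beta_2)$ feeds one $\mathbf{B}$ atom and splits into a game torus and an annulus torus; the two game tori then both run into the game $\mathbf{C}_2$ and the two annulus tori into the annulus $\mathbf{C}_2$, which produces the complete bipartite pattern $K_{2,2}$ of edges between the two $\mathbf{B}$ atoms and the two $\mathbf{C}_2$ atoms seen in Figure \ref{fig:FomenkoEx1b}. Establishing that the splitting is exactly ``one game plus one annulus torus'' per $\mathbf{B}$ atom, rather than both tori of a single family feeding one $\mathbf{B}$, is the delicate combinatorial point; I expect it to require tracking a trajectory through a full period on either side of the grazing tangency, using the circulation sense as the invariant preserved across the $\mathbf{B}$ atom.
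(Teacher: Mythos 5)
Your proposal is correct and follows essentially the same route as the paper's own proof: the same leaf-by-leaf count (two tori on $(\beta_1,\beta_2)$, four on $(\beta_2,b)$ and $(b,a)$, split into the game and annulus families), the same identification of the non-extendable grazing level $\lambda=\beta_2$ via the $\sigma_2=(1234)$ excursion $\L_1\to\L_2\to\L_3\to\L_4$ producing two $\mathbf{B}$ atoms indexed by circulation sense, and the same assignment of the two $\mathbf{C}_2$ atoms at $\lambda=b$ (one per family, with periodic orbits $A_1A_2'$, $A_1'A_2$ for the game and $A_1A_2$, $A_1'A_2'$ for the annulus) and four $\mathbf{A}$ atoms at $\lambda=a$. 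Even your resolution of the "delicate" incidence question—that each $\mathbf{B}$ atom collects one game torus and one annulus torus of the same winding direction, yielding the $K_{2,2}$ pattern—is exactly how the paper settles it.
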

\begin{proof}
The leaves on the levels $\lambda\in[\beta_1,\beta_2)$ 
are described in the same way as in the proof of Proposition \ref{prop:fomenko-ex1}.

On the other hand, in contrast to the situation 
in Proposition \ref{prop:fomenko-ex1}, the billiard flow cannot be continuously extended to the level set $\lambda=\beta_2$.
Namely, consider the billiard particle starting on leaf $\L_1$ (see Figure \ref{fig:Ex1b}), such that its motion is parallel to a given tangent line to $\E_2$ and close to the touching point of that line to $\E_2$.
On one side of that tangent line, the particle does not reach $\E_2$, and continues its motion only on leaf $\E_1$.
On the other side of the tangent line, the particle crosses $\E_2$ to enter leaf $\L_2$, then bounces off $\E_2$, which occurs when the particle crosses from $\L_2$ to $\L_3$, and finally crosses to $\L_4$.
Thus, the continuous extension for the motion along the tangent line would be simply to carry on straightforward also after touching $\E_2$, if we consider the limit from one side, or to cross to $\L_4$, which is the limit from the other side of the tangent line.
Thus, the level set $\lambda=\beta_2$ is singular.
We note that a similar situation was considered in \cite{VF2017}.

That level set still has two connected components, each one corresponding to one direction of motion around the origin. 
Each connected component consists of one circle, which is 1-to-1 projected to the ellipse $\E_2$, and two separatrices, each one projected to the union of the leaves $\L_1$ and $\L_4$.
Thus, those components are isomorphic to the Fomenko $\mathbf{B}$ atoms.

For each parameter $\lambda\in(\beta_2,b)$, there are four tori: two tori correspond to clockwise winding around the caustic, and two to anticlockwise winding.
In both pairs of those tori, one corresponds to the billiard in the annulus between $\E_1$ and $\E_2$, and the other one to the billiard game $(\E_1,\E_2)$ with signature $(1,1)$.

In Figure \ref{fig:FomenkoEx1b}, all three edges joining one $\mathbf{B}$ vertex will correspond to the winding in the same direction around the origin.

For $\lambda=b$, we have two singular level sets, both corresponding to the $\mathbf{C}_2$ Fomenko atom.
In one of those level sets, the two periodic orbits are those with reflections at points $A_1$, $A_2$ and $A_1'$, $A_2'$, while each separatrix contains only orbits always remaining on one side of the $x$-axis, asymptotically approaching one of the periodic orbits as $t\to+\infty$ and the other one as $t\to-\infty$.
That singular level set corresponds to the billiard in the annulus between $\E_1$ and $\E_2$ and all four edges joining the corresponding $\mathbf{C}_2$ vertex in Figure \ref{fig:FomenkoEx1b} also correspond to the billiard motion in the annulus.

In the other $\mathbf{C}_2$ singular level set, the two periodic orbits are those with reflections at points $A_1$, $A_2'$ and $A_1'$, $A_2$.
That singular level set corresponds to the billiard ordered game $(\E_1,\E_2)$ with signature $(1,1)$ and all four edges joining the corresponding $\mathbf{C}_2$ vertex in Figure \ref{fig:FomenkoEx1b} also correspond to the same game.
 
Between $\lambda = b$ and $\lambda = a$, the caustic is a hyperbola and there are again four tori: two tori correspond to trajectories of the billiard in the annulus between $\E_1$ and $\E_2$, one in the upper half-plane, the other in the lower half-plane, while two remaining tori are as described in the proof of Proposition \ref{prop:fomenko-ex1}.
	
There are four singular type $\mathbf{A}$ leaves for $\lambda=a$: they are periodic orbits with reflection points: $B_1$ and $B_2$, $B_1'$ and $B_2'$, $B_1$ and $B_2'$, $B_1'$ and $B_2$.
\end{proof}

\begin{proposition}\label{prop:fomenko-ex111}
	The isoenergy manifolds of the billiard within the books from Examples \ref{ex:111}, \ref{ex:111b}, \ref{ex:11-1}, \ref{ex:11-1b} are given by the Fomenko graph in Figure \ref{fig:FomenkoEx111}.
	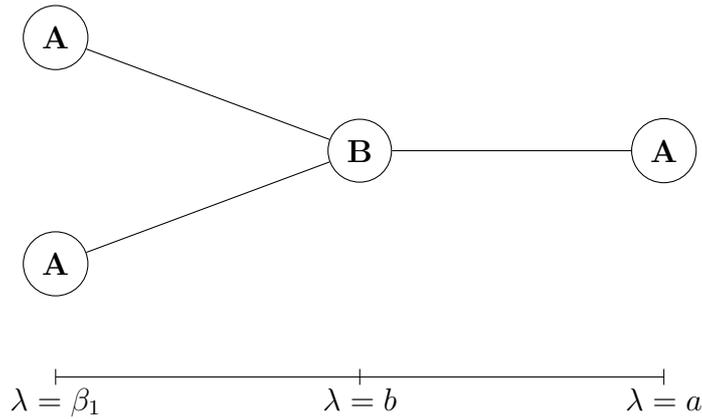
\begin{figure}[h]
		\centering
		\begin{tikzpicture}[>=stealth',el/.style = {inner sep=3pt, align=left, sloped},em/.style = {inner sep=3pt, pos=0.75, sloped}]
		\tikzset{vertex/.style = {shape=circle,draw,minimum size=1.5em}}
		\node[vertex] (aa) at  (0,1.5) {$\mathbf{A}$};
		\node[vertex] (bb) at  (4,0) {$\mathbf{B}$};
		\node[vertex] (ee) at  (8,0) {$\mathbf{A}$};
		\node[vertex] (ff) at  (0,-1.5) {$\mathbf{A}$};
		\path[-] 
		(bb) edge (aa) 
		(bb) edge (ee) 
		(bb) edge (ff); 
		\draw (0,-3) -- (8,-3);
		\foreach \x in {0,4,8}
		\draw[shift={(\x,-3)},color=black] (0pt,3pt) -- (0pt,-3pt);
		\node[below] at (0,-3) { $\lambda=\beta_1$};
		\node[below] at (4,-3) { $\lambda=b$};
		\node[below] at (8,-3) { $\lambda=a$};
		\end{tikzpicture}
		\caption{Proposition \ref{prop:fomenko-ex111}: Fomenko graph for the billiard within books from Examples \ref{ex:111}, \ref{ex:111b} \ref{ex:11-1}, \ref{ex:11-1b}.}
		\label{fig:FomenkoEx111}
	\end{figure}
\end{proposition}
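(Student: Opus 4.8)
The plan is to follow the template of the proof of Proposition~\ref{prop:fomenko-ex1}, computing the level set for each value of the caustic parameter $\lambda$ and identifying the corresponding Fomenko atom, while isolating the three features that distinguish the present situation: the continuous extension of the flow across the intermediate levels $\lambda=\beta_2$ and $\lambda=\beta_3$, the collapse of two tori into one for hyperbolic caustics, and the fact that all four books produce the very same unmarked graph. I would begin with the endpoints. At $\lambda=\beta_1$ the caustic coincides with the convex outer boundary $\E_1$ (which bounds the outermost annular leaf from outside), so the level set is the degenerate limiting flow sliding along $\E_1$; its two orientations give the two type-$\mathbf{A}$ atoms on the left. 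At $\lambda=a$ the level set consists of the minor-axis periodic orbit(s), contributing the single type-$\mathbf{A}$ atom on the right.

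Next, for each $\lambda\in(\beta_1,b)$ the caustic is an ellipse and there are exactly two Liouville tori, distinguished by the direction of winding around the origin. The essential point is that, although the trajectory type changes as $\lambda$ crosses $\beta_2$ and then $\beta_3$ (the particle begins to visit $\E_2$, then $\E_3$), the flow extends continuously across these two levels by exactly the tangent-line argument of Proposition~\ref{prop:fomenko-ex1}: the gluing permutations $\sigma_2$, $\sigma_3$ are arranged so that the limit of the excursion into the inner leaves, taken along trajectories tangent to $\E_2$ (respectively $\E_3$), agrees with the straight-through limit taken from the side on which the particle never reaches that ellipse. Hence $\beta_2$ and $\beta_3$ are regular values, carry no vertices, and the two tori persist over the whole interval $(\beta_1,b)$, limiting to the two left-hand $\mathbf{A}$ atoms.

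The heart of the argument is the level $\lambda=b$ together with the hyperbolic range $(b,a)$. For a hyperbolic caustic each trajectory segment crosses the $x$-axis once, so over one full period of the game the upper/lower half-plane labelling of the reflection points is reversed, the length of all four games being the odd number $3$; consequently the two half-plane configurations are joined by the dynamics into a \emph{single} Liouville torus for each $\lambda\in(b,a)$. By the same parity, the focal level $\lambda=b$ carries a single critical circle — the focal periodic orbit through $A_1,A_2,A_3$ and their reflected vertices closes up only after two periods — whose separatrices form a figure eight gluing the two elliptic-side tori to the one hyperbolic-side torus. This is precisely a type-$\mathbf{B}$ atom (two tori on one side, one on the other), in contrast with the $\mathbf{C}_2$ atom of Proposition~\ref{prop:fomenko-ex1}, where the even length $2$ preserved the labelling and produced two critical orbits and two tori on each side.

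Finally I would verify that all four books share this graph. Examples~\ref{ex:111b} and \ref{ex:11-1b} merely invert the gluing permutations relative to Examples~\ref{ex:111} and \ref{ex:11-1}, which reverses the direction of the flow but leaves the unmarked graph unchanged; passing between the five-leaf books and the three-leaf books, i.e.\ replacing the inside reflection off $\E_3$ by an outside one, alters neither the convexity of the outer boundary, nor the continuity at $\beta_2,\beta_3$, nor the parity, nor any of the torus counts, so the same vertices and edges are obtained in every case. I expect the main obstacle to be the bookkeeping that underlies the $\mathbf{B}$ atom: one must track the half-plane labelling carefully around a full period of each length-$3$ game to be certain that the two apparent components of both the hyperbolic level and the focal level are genuinely connected into one, rather than only appearing to merge; establishing the continuous extension simultaneously at $\beta_2$ and $\beta_3$ for each of the four distinct gluing schemes is the other point demanding care.
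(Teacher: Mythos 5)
Your overall strategy coincides with the paper's: settle the endpoints, show that $\beta_2$ and $\beta_3$ are regular levels by the tangent-line continuity argument of Proposition~\ref{prop:fomenko-ex1}, and reduce the change of graph to the fact that the singular levels $\lambda=b$ and $\lambda=a$ now carry a \emph{single} closed orbit, closing only after six reflections, so that the $\mathbf{C}_2$ atom becomes $\mathbf{B}$ and the two $\mathbf{A}$'s at $\lambda=a$ become one. All of your conclusions are correct; the paper's own proof is essentially a terse version of this.

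There is, however, a genuine flaw in the justification of your central parity step, and it affects exactly two of the four books. You claim that for a hyperbolic caustic ``each trajectory segment crosses the $x$-axis once'' and derive the reversal of the half-plane labelling from the odd game length $3$. This premise holds for Examples~\ref{ex:111} and~\ref{ex:111b} (all reflections from inside): there every segment between consecutive reflections meets the interior of the innermost ellipse $\E_3$, hence contains the unique intersection point of its line with the $x$-axis, which for a line tangent to a hyperbolic caustic lies strictly between the foci and therefore strictly inside $\E_3$. But in Examples~\ref{ex:11-1} and~\ref{ex:11-1b} the reflection off $\E_3$ is from \emph{outside}: the two segments adjacent to that reflection lie entirely in the closed region outside $\E_3$, so neither of them crosses the $x$-axis at all. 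The correct count is \emph{one} crossing per period (only the segment joining the $\E_1$- and $\E_2$-reflections, which contains a full chord of $\E_2$, crosses). Since $1$ is odd, the conclusion survives --- the labelling is still reversed after one period, giving one torus for $\lambda\in(b,a)$ and one six-reflection orbit at each of $\lambda=b$, $\lambda=a$ --- but your closing assertion that replacing the inside reflection off $\E_3$ by an outside one ``alters neither \dots the parity'' is precisely the point where your stated reasoning breaks and must be redone with this corrected count. A smaller instance of the same oversight: in Examples~\ref{ex:11-1} and~\ref{ex:11-1b} there is no gluing $\sigma_3$ at all ($\E_3$ bounds only the leaf $\L_3$, so rule (R1) applies there), so the continuity at $\lambda=\beta_3$ in those books is the ordinary continuity of grazing reflection off a convex obstacle hit from outside, not the shrinking of an ``excursion into inner leaves.''
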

\begin{proof}
The discussion is similar as in Proposition \ref{prop:fomenko-ex1}.
Notice that here we will have only one periodic orbit on the singular level sets $\lambda=b$ and $\lambda=a$.
Each of those orbits becomes closed after six reflections.	
\end{proof}

\begin{proposition}\label{prop:FomenkoEx111c}
The isoenergy manfolds for the billiard book given in Examples \ref{ex:111c}, \ref{ex:11-1c}, \ref{ex:11-1d} are given by the Fomenko graph in Figure \ref{fig:FomenkoEx111c}.
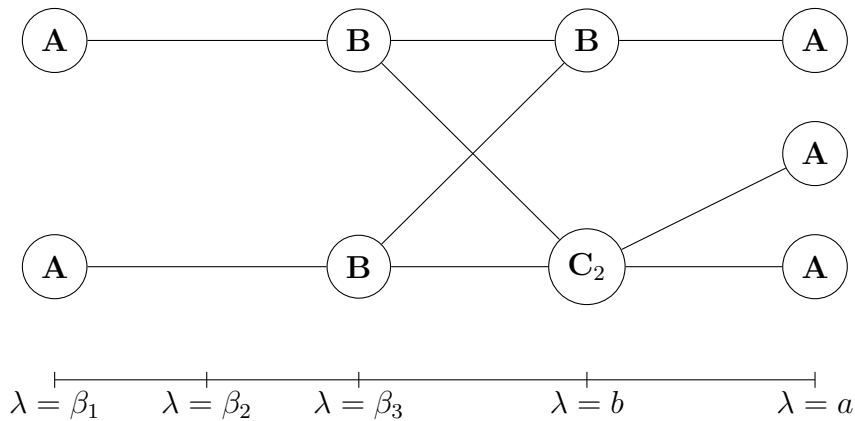
\begin{figure}[h]
	\centering
	\begin{tikzpicture}[>=stealth',el/.style = {inner sep=3pt, align=left, sloped},em/.style = {inner sep=3pt, pos=0.75, sloped}]
	\tikzset{vertex/.style = {shape=circle,draw,minimum size=1.5em}}
	\node[vertex] (a) at  (0,1.5) {$\mathbf{A}$};
	\node[vertex] (b) at  (0,-1.5) {$\mathbf{A}$};
	\node[vertex] (c) at  (4,1.5) {$\mathbf{B}$};
	\node[vertex] (d) at  (4,-1.5) {$\mathbf{B}$};
	\node[vertex] (e) at  (7,1.5) {$\mathbf{B}$};
	\node[vertex] (f) at  (7,-1.5) {$\mathbf{C}_2$};
	\node[vertex] (g) at  (10,1.5) {$\mathbf{A}$};
	\node[vertex] (h) at  (10,0) {$\mathbf{A}$};
	\node[vertex] (i) at  (10,-1.5) {$\mathbf{A}$};
	\path[-] 
	(c) edge (a) 
	(d) edge (b) 
	(e) edge (c)
	(f) edge (c)
	(e) edge (d)
	(f) edge (d)
	(e) edge (g)
	(f) edge (h)
	(f) edge (i); %
	\draw (0,-3) -- (10,-3);
	\foreach \x in {0,2,4,7,10}
	\draw[shift={(\x,-3)},color=black] (0pt,3pt) -- (0pt,-3pt);
	\node[below] at (0,-3) { $\lambda=\beta_1$};
	\node[below] at (2,-3) { $\lambda=\beta_2$};
	\node[below] at (4,-3) { $\lambda=\beta_3$};
	\node[below] at (7,-3) { $\lambda=b$};
	\node[below] at (10,-3) { $\lambda=a$};
	\end{tikzpicture}
	\caption{Proposition \ref{prop:FomenkoEx111c}: Fomenko graph for the billiard books given in Examples \ref{ex:111c}, \ref{ex:11-1c}, \ref{ex:11-1d}.}
	\label{fig:FomenkoEx111c}
\end{figure}
\end{proposition}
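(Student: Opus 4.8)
The plan is to proceed exactly as in Propositions \ref{prop:fomenko-ex1} and \ref{prop:fomenko-ex1b}: I would cut the $\lambda$-axis at the critical values $\beta_1<\beta_2<\beta_3<b<a$, determine the number of Liouville tori over each open interval, and then read off the Fomenko atom over each critical value from the periodic orbits and separatrices living there. The torus counts come directly from the trajectory classification already recorded in Example \ref{ex:111c}. For $\lambda\in(\beta_1,\beta_2)$ the dynamics is the billiard inside $\E_1$ realised on $\L_1,\L_6$; for $\lambda\in(\beta_2,\beta_3)$ it is the game $(\E_1,\E_1,\E_2)$ on $\L_1,\L_2,\L_3,\L_6$; and for $\lambda\in(\beta_3,b)$ as well as $\lambda\in(b,a)$ the trajectories split, according to the direction of motion on $\L_1$, into the length-three game $(\E_1,\E_2,\E_3)$ and the billiard in the annulus between $\E_1$ and $\E_3$. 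Counting one torus per direction of winding about the origin gives two tori on $(\beta_1,\beta_3)$, four on $(\beta_3,b)$, and, after the degeneration of the caustic at $b$, three on $(b,a)$, which already forces the vertex pattern of Figure \ref{fig:FomenkoEx111c}.

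The two outer levels are immediate. Over $\lambda=\beta_1$ the flow degenerates to the two oriented limit motions along $\E_1$, giving the two $\mathbf{A}$ atoms; over $\lambda=a$ the three tori close into periodic orbits, one for the game $(\E_1,\E_2,\E_3)$, closing after six reflections as in Proposition \ref{prop:fomenko-ex111}, and two for the annular billiard as in Proposition \ref{prop:fomenko-ex1}, giving the three $\mathbf{A}$ atoms. The crux of the argument is distinguishing the two interior degenerate levels $\beta_2$ and $\beta_3$, where the caustic coincides with $\E_2$, respectively $\E_3$. At $\lambda=\beta_2$ I would argue, as in Proposition \ref{prop:fomenko-ex1}, that the flow extends continuously: the excursion $\L_1\to\L_2\to\L_3\to\L_1$ made by a particle crossing $\E_2$ collapses, in the limit, to a segment merely grazing $\E_2$ and returning to the same leaf $\L_1$, so the two one-sided limits of the flow agree and no atom appears, which is exactly why there is only a tick, and no vertex, at $\beta_2$. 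At $\lambda=\beta_3$, by contrast, the caustic crossing $\E_3$ is a genuine bifurcation: once the caustic lies inside $\E_3$, each of the two incoming tori breaks into the game that enters the disks $\L_4,\L_5$ and the annular billiard that reflects off $\E_3$ from outside. The level over $\beta_3$ therefore consists of two $\mathbf{B}$ atoms and the count rises from two to four.

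It remains to pin down the two atoms over $\lambda=b$. I would split the four tori into the two belonging to the game $(\E_1,\E_2,\E_3)$ and the two belonging to the annular billiard. For the annular family the analysis of Proposition \ref{prop:fomenko-ex1} transfers verbatim: there are two closed orbits through the $x$-axis vertices together with four separatrices, producing a $\mathbf{C}_2$ atom, the vertex joining two incoming and two outgoing edges. For the game family, as noted in Proposition \ref{prop:fomenko-ex111}, the level $\lambda=b$ carries a single periodic orbit, so the two incoming tori merge into one outgoing torus across a single saddle circle, giving a $\mathbf{B}$ atom with two edges on the left and one on the right. Together these realise the transition from four tori to three and complete the graph.

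Finally, the same graph must be established for Examples \ref{ex:11-1c} and \ref{ex:11-1d}. Although those books realise different games (signatures $(1,1,-1)$ and $(1,-1,1)$) and use a different set of leaves, in particular no disk inside $\E_3$ since there the reflection off $\E_3$ is from outside, the partition of trajectories by caustic interval and winding direction is identical to that of Example \ref{ex:111c}, and the length-three game in each case again carries a single periodic orbit over both $\lambda=b$ and $\lambda=a$. Since the atom type at a critical level is determined only by the number of tori and by the periodic orbits and separatrices there, the bifurcation pattern is unchanged and the Fomenko graphs coincide. I expect the main obstacle to be precisely the two interior degenerate levels: one must justify, by a careful limiting analysis of the gluing excursions, that the flow extends smoothly across $\beta_2$ but bifurcates across $\beta_3$, and one must check that reversing the role of $\E_3$ (reflection from outside in Examples \ref{ex:11-1c}, \ref{ex:11-1d}) still yields a $\mathbf{B}$ atom for the game rather than a $\mathbf{C}_2$.
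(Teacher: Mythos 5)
Your proposal is correct and follows essentially the same route as the paper's own proof: decompose the isoenergy manifold by the caustic parameter $\lambda$, count tori over each interval using the trajectory classification of Example \ref{ex:111c}, identify the two $\mathbf{B}$ atoms at $\beta_3$, the $\mathbf{B}$ (game) and $\mathbf{C}_2$ (annulus) atoms at $\lambda=b$, and the $\mathbf{A}$ atoms at the endpoints, handling Examples \ref{ex:11-1c} and \ref{ex:11-1d} by the analogous argument. Your explicit justification that the flow extends continuously across $\lambda=\beta_2$ (so no atom appears there) and your check that the length-three game still yields a $\mathbf{B}$ atom at $\lambda=b$ are details the paper delegates to references to Propositions \ref{prop:fomenko-ex1}, \ref{prop:fomenko-ex1b} and \ref{prop:fomenko-ex111}, but they agree with those arguments.
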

\begin{proof}
We provide the proof for Example \ref{ex:111c}.
The discussion of the remaining two examples is very similar.


At $\lambda =\beta_1$, there are singular circles: one corresponds to winding around the ellipse $\E_{1}$ in the clockwise direction and the other in the anticlockwise direction. This corresponds to the $\mathbf{A}$-atoms at $\lambda=\beta_1$. 

For $\lambda \in (\beta_1,\beta_3)$, each torus corresponds to clockwise or anticlockwise motion around the ellipse.

At $\lambda = \beta_3$, we have singular level sets corresponding to the $\mathbf{B}$-atoms.
Each of those sets contains one circle, projected to the ellipse $\E_3$, and two separatrices.
All edges joining one $\mathbf{B}$-vertex at the level $\lambda=\beta_3$ correspond to the same direction of winding about the origin.

For $\lambda \in (\beta_3,b)$, two edges contain the tori corresponding to the billiard within the annulus between $\E_1$ and $\E_3$: those edges have joint $\mathbf{C}_2$ vertex at the level $\lambda=b$, while the remaining two edges correspond to the billiard game $(\E_1,\E_2,\E_3)$ with signature $(1,1,1)$ and they have joint $\mathbf{B}$ vertex at level $\lambda=b$.

The description of the edge connecting $\mathbf{B}$ and $\mathbf{A}$ vertices for $\lambda\in(b,a)$, including those two singular level sets is similar as in Proposition \ref{prop:fomenko-ex111}, while the description of the edges connecting $\mathbf{C}_2$ with $\mathbf{A}$ vertices is similar to the corresponding description in Proposition \ref{prop:fomenko-ex1b}.
\end{proof}

\begin{remark}
We note that a general billiard book will have several different regimes of motion, which correspond to different billiard ordered games.
On the corresponding Fomenko graph, each edge is divided to segments which correspond to one billiard ordered game.
For example, the two parts of the edges over segments $(\beta_1, \beta_2]$ of the graph in Figure \ref{fig:FomenkoEx1} correspond to the classical billiard within $\E_1$, while the parts over segments $(\beta_2,b)$ and edges over $(b,a)$ correspond to the billiard ordered game $(\E_1,\E_2)$ with the signature $(1,1)$.
\end{remark}

\section{Conclusions and discussion}
In this work, we united two wide modern generalisations of classical billiards and we beleive that our results may serve as a starting point for further research.

Some questions for consideration may be:
\begin{itemize}
\item to determine all billiard books that realize a given billiard ordered game and, in particular, those books with minimal number of leaves;
\item in particular, it would be interesting to know if a given billiard ordered game can be realised in a book that has less leaves than by construction in Theorem \ref{th:algor} or whose number of leaves is smaller than the bound given in Corollary \ref{cor:numleaves};
\item to determine all possible Fomenko graphs and Fomenko-Zieschang invarinant that may appear in realizations of the billiard ordered games;
\item to determine equivalent systems in the classes of integrable rigid body, their analogs for other Lie algebras, integrable geodesic flows, integrable billiards.	
\end{itemize}

We would also like to mention a parallel to another direction of our research, where we studied \emph{pseudo-integrable billiards}, which are classical billiards within desks bounded by arcs of confocal conics, where corners may appear on the boundary, see \cite{DR2014,DR2014b,DR2015,DR2015b}.
There is an immediate similarity in the settings of those billiards with the billiard ordered games, since in both cases the boundary consists of multiple confocal conics.
Due to the existence of caustics, the phase spaces of both billiard ordered games and pseudo-integrable billiards is foliated into $2$-dimensional leaves.
Those leaves, in both cases, are obtained by gluing several pieces bounded by confocal arcs.
On the other hand, one important distinction in the structure of the phase space is that non-singular leaves for the billiard ordered games are tori, while for pseudo-integrable billiards they can be surfaces of any genus.
The dynamics is also different, since non-convex corners on the boundary imply the existence of separatrices on non-singular level sets.

\section*{Acknowledgements}
We are grateful to anonymous referees for their careful reading of this work and suggestions which enabled us to improve it.

This research is partially supported by the Discovery Project No.~DP200100210 \emph{Geometric analysis of nonlinear systems} from the Australian Research Council, 
by  Mathematical Institute of the Serbian Academy of Sciences and Arts, the Science Fund of Serbia grant Integrability and Extremal Problems in Mechanics, Geometry and
Combinatorics, MEGIC, Grant No.~7744592 and the Ministry for Education, Science, and Technological Development of Serbia and the Simons Foundation grant no.~854861.

\bibliographystyle{amsalpha}
\nocite{*}
\bibliography{References}

\end{document}